\documentclass[12pt]{article}

\usepackage{amsmath,amssymb}
\usepackage{amsthm}
\usepackage{mathrsfs}
\usepackage[margin=1in]{geometry}
\usepackage{ifpdf}
\ifpdf
  \usepackage[hidelinks]{hyperref}
\else
  \usepackage[dvipdfmx,hidelinks]{hyperref}
\fi

\numberwithin{equation}{section}
\allowdisplaybreaks[3]

\title{On Weakly Separable Polynomials and Weakly Quasi-separable Polynomials over Rings}
\author{Satoshi Yamanaka\\[0.25em]
\small Department of Integrated Science and Technology\\
\small National Institute of Technology, Tsuyama College\\
\small Okayama 708-8509, JAPAN\\
\small \texttt{yamanaka@tsuyama.kosen-ac.jp}}
\date{}

\newtheorem{thm}{\quad Theorem}[section]
\newtheorem{lem}[thm]{\quad Lemma}

\newtheorem{prop}[thm]{\quad Proposition}
\newtheorem{cor}[thm]{\quad Corollary}
\newtheorem{dfn}[thm]{\quad Definition}
\theoremstyle{remark}
\newtheorem*{exm}{Example}
\newtheorem*{rmk}{Remark}

\begin{document}

\maketitle

%\begin{center}
%\emph{Dedicated to Professor Takasi Nagahara on his 85th birthday}
%\end{center}

\begin{abstract}
Separable extensions of  noncommutative rings have  already been studied extensively.  
Recently, N. Hamaguchi and A. Nakajima introduced the notions of weakly separable  extensions and  weakly quasi-separable extensions. 
They studied weakly separable polynomials and weakly quasi-separable polynomials in the 
case that the coefficient ring is  commutative.     
The purpose of this paper is to give some improvements and  generalizations of  
Hamaguchi and Nakajima's results. 
We shall characterize a weakly separable polynomial $f(X)$ over a commutative ring by using  its derivative $f'(X)$ 
and its discriminant $\delta(f(X))$. 
Further, 
we shall 
try to give necessary and sufficient conditions for weakly separable polynomials in skew polynomial rings 
in the case that the coefficient ring  is  noncommutative.
\end{abstract}

\vspace{0.5cm}
\noindent
{\small {\bf Note for the arXiv version.} This manuscript corresponds to the article ``On Weakly Separable Polynomials and Weakly Quasi-separable Polynomials over Rings,'' published in {\it Math. J. Okayama Univ.} {\bf 58} (2016), 169--182. The version of record is available at \href{https://doi.org/10.18926/mjou/53924}{10.18926/mjou/53924}.}
\vspace{0.5cm}

{\bf Mathematics Subject Classification:} Primary 16S36; Secondary 16S32\\

{\bf Keywords:} separable extension, quasi-separable extension, weakly separable extension, weakly quasi-separable extension, skew polynomial ring, derivation.

\section{Introduction}

Throughout this paper, $A/B$ will represent a ring extension with common identity 1. 
Let $M$ be  an $A$-$A$-bimodule, and $x$, $y$ arbitrary elements in $A$. 
Then an additive map $\delta$ is called a {\it $B$-derivation} of $A$ to $M$ if 
$\delta(xy) = \delta(x)y + x\delta(y)$  and $\delta(\alpha)=0$ for any $\alpha \in B$. 
Moreover, $\delta$ is called {\it central} if $\delta(x)y=y\delta(x)$, and 
$\delta$ is called {\it inner} if $\delta(x)=mx -xm$ for some fixed element $m \in M$. 
We say that a ring extension $A/B$ is {\it separable}
if the $A$-$A$-homomorphism of $A \otimes_BA$ onto $A$
defined by $a \otimes b \mapsto ab$ splits. 
It is well known that $A/B$ is separable if and only if for any $A$-$A$-bimodule $M$, every $B$-derivation of $A$ to $M$ is inner 
(cf. \cite[Satz 4.2]{E}). 
In \cite{N}, Y. Nakai introduced the notion of a quasi-separable extension of commutative rings  by using the module differentials, 
and in the noncommutative case, it was characterized by H. Komatsu \cite[Lemma 2.1]{Ko} as follows : 
$A/B$ is  {\it quasi-separable} if and only if for any $A$-$A$-bimodule $M$, every central $B$-derivation of $A$ to $M$ is zero. 
Recently in \cite{HN}, N. Hamaguchi and A. Nakajima gave the following definitions as  generalizations  
of  separable extensions and quasi-separable extensions. 
\begin{dfn}{\rm \cite[Definition 2.1]{HN}}
{\rm (1) $A/B$ is called}  {\it weakly separable} {\rm if every $B$-derivation of $A$ to $A$ is inner.}\par
{\rm (2) $A/B$  is called}  {\it weakly quasi-separable}   {\rm if every central $B$-derivation  of $A$ to $A$ is zero.}
\end{dfn}
Obviously, a separable extension is weakly separable and a quasi-separable extension is weakly quasi-separable. 
Moreover, a separable extension is quasi-separable by \cite[Theorem 2.4]{Ko}.

Let $B$ be a ring, $\rho$ an automorphism of $B$, $D$ a $\rho$-derivation, that is, $D$ is an additive endomorphism of $B$
 such that $D(\alpha \beta) = D(\alpha)\rho(\beta) +\alpha D(\beta)$ for any $\alpha, \beta \in B$.    
$B[X;\rho, D]$ will mean  the skew polynomial ring in which
the multiplication is given by  $\alpha X = X\rho(\alpha) + D(\alpha)$ for any  $\alpha \in B$. 
We write $B[X; \rho] = B[X;\rho, 0]$   and $B[X; D] = B[X; 1, D]$. 
By $B[X;\rho, D]_{(0)}$ we denote the set of all monic polynomials $g$ 
in $B[X;\rho, D]$ such that $gB[X;\rho, D]=B[X;\rho, D]g$. 
Let $f$ be in $B[X;\rho, D]_{(0)}$. Then the residue ring $B[X;\rho, D]/fB[X;\rho,D]$ is a free ring extension of $B$.   
We say that $f$ is a {\it separable} (resp. {\it weakly separable}, {\it weakly quasi-separable}) {\it polynomial} in $B[X;\rho, D]$ 
if $B[X;\rho, D]/fB[X;\rho,D]$ is separable  (resp. weakly separable, weakly quasi-separable) over $B$.

Separable polynomials in skew polynomial rings have been extensively studied by 
K. Kishimoto, 
T. Nagahara, Y. Miyashita, and S. Ikehata (see References). 
T. Nagaraha studied separable polynomials over a commutative ring 
and  separable polynomials of degree 2 in skew polynomial rings thoroughly.  
He characterized a separable polynomial $f(X)$ over a commutative ring by using its derivative $f'(X)$ and 
its discriminant $\delta(f(X))$ as follows : 
\begin{prop}\label{N}\cite[Theorem 2.3]{N1}
Let $B$ be a commutative ring, and $f(X)$ a monic polynomial in $B[X]$. 
Then the following are equivalent.\par
$(1)$ $f(X)$ is separable in $B[X]$.\par 
$(2)$ $f'(X)$ is invertible in $B[X]$ modulo $(f(X))$.\par
$(3)$ $\delta(f(X))$ is  invertible in $B$.
\end{prop}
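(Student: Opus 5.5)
We work in $A = B[X]/(f(X))$, which is free over $B$ with basis $1, x, \dots, x^{n-1}$, where $x$ is the image of $X$ and $n=\deg f$. Since $A$ is commutative, we use the standard description of separability for commutative extensions. Let $\mu : A\otimes_B A \to A$ be multiplication and $J=\ker\mu$. Then $A/B$ is separable if and only if there is an idempotent $e\in A\otimes_B A$ with $\mu(e)=1$ and $Je=0$.

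Because $A = B[X]/(f)$, we have
\[
A\otimes_B A \cong B[X,Y]/(f(X),f(Y)),
\]
and $J$ is generated by $X-Y$. Write
\[
f(X)-f(Y)=(X-Y)\,\Delta(X,Y), \qquad \Delta(X,Y)=\sum_{i} a_i(X)\,Y^{i},
\]
so that $\mu(\Delta)=f'(x)$. The condition $(X-Y)\Delta = 0$ holds in $A\otimes_B A$. The key step is the standard fact that the annihilator of $J$ in $A\otimes_B A$ equals $(1\otimes A)\Delta = (A\otimes 1)\Delta$. This holds because $A\otimes_B A$ is free over $A\otimes 1$ with basis $1\otimes x^j$, and $X-Y$ acts as a companion-type matrix whose kernel is computed explicitly.

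\textbf{(1)$\Rightarrow$(2).} A separability idempotent $e$ lies in $\mathrm{Ann}(J)$. Hence $e=(g(x)\otimes 1)\Delta$ for some $g$. Then $1=\mu(e)=g(x)f'(x)$, so $f'(x)$ is invertible in $A$.

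\textbf{(2)$\Rightarrow$(1).} If $g(x)f'(x)=1$, put $e=(g(x)\otimes1)\Delta$. Then $Je=0$ and $\mu(e)=1$. Moreover, for $z\in A\otimes_B A$, the element $z-(\mu(z)\otimes 1)$ lies in $J$, so $ze=(\mu(z)\otimes1)e$. In particular
\[
e^2=(\mu(e)\otimes 1)e=e .
\]
This splitting of $\mu$ gives separability. An alternative for this direction: $f'$ invertible means $\Omega_{A/B}=A\,dx/(f'(x)dx)=0$, and $A$ is finite projective, so $A$ is étale, hence separable.

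\textbf{(2)$\Leftrightarrow$(3).} The discriminant satisfies
\[
\delta(f)=\pm N_{A/B}(f'(x)),
\]
where $N_{A/B}$ is the determinant of multiplication by $f'(x)$ on the free module $A$. This is the usual resultant identity $\mathrm{Res}(f,f')=\pm\delta(f)$ for monic $f$. It holds over $\mathbb{Z}[\text{coefficients}]$ as a polynomial identity, by comparing with the product over roots in a splitting field of the generic polynomial, and then specializes to $B$. An element of a commutative $B$-algebra that is free of finite rank is a unit if and only if its norm is a unit in $B$, by Cayley--Hamilton or the adjugate matrix. Hence $f'(x)$ is invertible in $A$ if and only if $\delta(f)\in B^{\times}$.

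The main obstacle is the annihilator computation $\mathrm{Ann}(J)=(A\otimes1)\Delta$ used in (1)$\Rightarrow$(2). It can be avoided by a localization argument: separability and invertibility of units are both local. After reducing modulo a maximal ideal $\mathfrak m$, $A/\mathfrak m A$ is separable over the field $B/\mathfrak m$ if and only if $f$ mod $\mathfrak m$ has no repeated roots. By field theory, this is equivalent to $\gcd(\bar f,\bar f')=1$, and hence to $f'(x)$ being a unit modulo $\mathfrak m$. Nakayama's lemma then lifts this back to $B$.
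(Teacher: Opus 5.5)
Your argument is correct, and it is essentially the route the paper relies on. The paper gives no proof of its own and quotes the result from Nagahara, but your key step $\mathrm{Ann}(J)=(A\otimes 1)\Delta$, with $\Delta=\sum_j y_j\otimes x^j$ and $\mu(\Delta)=f'(x)$, is exactly the description $(A\otimes_BA)^A=\{\sum_j y_jh\otimes x^j \mid h\in A\}$ that the paper imports from \cite[Lemma 2.1]{YI1} in the theorem of Section 2, so your (1)$\Leftrightarrow$(2) is the case $\rho=1$, $D=0$ of Proposition \ref{M}, and for (2)$\Leftrightarrow$(3) the paper only cites \cite[Theorem 1.3]{N1}, of which your norm/resultant argument is the standard proof.
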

Concerning separable polynomials in skew polynomial rings, Y. Miyashita proved the following. 
\begin{prop}\label{M}{\rm \cite[Theorem 1.8]{M}}
Let $f=X^m+X^{m-1}a_{m-1}+\cdots + Xa_1 + a_0$ be in $B[X;\rho,D]_{(0)}$.
We set $A=B[X;\rho,D]/fB[X;\rho,D]$ and $x=X+fB[X;\rho,D]$. 
Then $f$ is separable in $B[X;\rho,D]$ if and only if 
there exists $h \in A$ such that $\rho^{m-1}(\alpha)h=h\alpha$ for any $\alpha \in B$ and 
$\sum_{j=0}^{m-1}y_j h x^j=1$, where $y_j=x^{m-j-1}+x^{m-j-2}a_{m-1}+ \cdots +xa_{j+2}+a_{j+1}$ $(0 \leq j \leq m-2)$ 
and $y_{m-1}=1$.  
\end{prop}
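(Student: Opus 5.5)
The plan is to use the standard characterization of separability by a separability element. $A/B$ is separable if and only if there is an element $e\in A\otimes_B A$ with $ae=ea$ for all $a\in A$ and $\mu(e)=1$, where $\mu$ is the multiplication map. So it suffices to determine all $A$-centralizing elements of $A\otimes_BA$ explicitly, and to show that they are exactly the elements
$$e_h=\sum_{j=0}^{m-1}y_j\otimes hx^j$$
with $h\in A$ satisfying $\rho^{m-1}(\alpha)h=h\alpha$ for $\alpha\in B$. Then $\mu(e_h)=\sum_j y_jhx^j$, and the proposition follows at once.

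\emph{Coordinates.} Since $f$ is monic and $\rho$ is an automorphism, $A$ is free as a right $B$-module on $1,x,\dots,x^{m-1}$. The elements $y_0,\dots,y_{m-1}$ differ from $x^{m-1},\dots,1$ by a unitriangular change of basis (we have $y_j=x^{m-1-j}+{}$lower terms), so they also form a right $B$-basis. Hence every element of $A\otimes_BA$ can be written uniquely as $\sum_j y_j\otimes c_j$ with $c_j\in A$.

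\emph{Commuting with $x$.} The key recursion is $y_{j-1}=xy_j+a_j$ for $1\le j\le m-1$, together with $xy_0=-a_0$ in $A$ (because $f(x)=0$). Moving each $a_j\in B$ across the tensor sign gives
$$x\sum_j y_j\otimes c_j=\sum_{j=0}^{m-2}y_j\otimes c_{j+1}-1\otimes\sum_{j}a_jc_j .$$
Since $y_{m-1}=1$, comparing coefficients with $\sum_j y_j\otimes c_jx$ yields:
\begin{itemize}
\item $c_{j+1}=c_jx$ for $j\le m-2$, so $c_j=hx^j$ with $h:=c_0$;
\item the extra condition $hx^m+\sum_j a_jhx^j=0$.
\end{itemize}

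\emph{Commuting with $B$.} Next I would impose $\alpha e=e\alpha$ for $\alpha\in B$. Comparing top-degree terms, $\alpha x^{m-1}=x^{m-1}\rho^{m-1}(\alpha)+{}$lower terms, which forces $\rho^{m-1}(\alpha)h=h\alpha$. Conversely, I would show that this condition makes all the lower-order terms match. The tool is the relations that $f\in B[X;\rho,D]_{(0)}$ imposes on the coefficients. Normality gives $\alpha f=f\rho^m(\alpha)$ (the degree-$0$ part of $\alpha f$ must equal $f$ times a constant), and $Xf=f(X+b)$ for some $b\in B$. These relations describe how $\alpha$ and $x$ pass through each $y_j$, and they allow the remaining coefficients to be checked inductively in $j$.

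\emph{Main obstacle.} The step I expect to be hardest is reconciling the extra condition from commuting with $x$ with the twisted condition on $h$. One must show that $hx^m+\sum_j a_jhx^j=0$ holds automatically once $\rho^{m-1}(\alpha)h=h\alpha$ holds and $f$ is normal; equivalently, that the $a_j$ can be moved past $h$ so that the expression becomes $h\,f(x)$ up to a twist. Since $hx^j=x^j\cdot(\text{twisted }h)$ plus corrections coming from $D$, my first attempt would be to use the relation $Xf=f(X+b)$ together with $\alpha f=f\rho^m(\alpha)$ to rewrite $\sum_j a_jhx^j$ as $-hx^m$. If that direct route proves messy, I would instead argue through the bijection
$$(A\otimes_BA)^A\cong \operatorname{Hom}_{B\text{-}B}(A,A)\quad\text{(twisted)},$$
under which $h$ corresponds to the image of the dual basis element. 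There the compatibility with $x$ is built into the identification.
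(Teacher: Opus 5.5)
The paper gives no proof of this proposition; it quotes it from Miyashita, and in Section~2 it uses the explicit description of $(A\otimes_BA)^A$ from [YI1] for commutative $B$. Your plan of computing $(A\otimes_BA)^A$ explicitly is therefore the natural one, and your ``only if'' half is sound. The recursion $y_{j-1}=xy_j+a_j$, $xy_0=-a_0$ does force an $x$-centralizing element to be $e_h=\sum_j y_j\otimes hx^j$ subject to $hx^m+\sum_j a_jhx^j=0$. The $y_0$-coordinate of $\alpha e_h-e_h\alpha$ is $\rho^{m-1}(\alpha)h-h\alpha$, and $\mu(e_h)=\sum_j y_jhx^j$.

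The gap is the ``if'' half, which you only announce. You must show that every $h$ with $\rho^{m-1}(\alpha)h=h\alpha$ makes $e_h$ commute with $x$, that is, $hx^m+\sum_{j<m} a_jhx^j=0$. Since the hypothesis gives $a_jh=h\rho^{-(m-1)}(a_j)$, this is the claim $h\bigl(x^m+\sum_{j<m}\rho^{-(m-1)}(a_j)x^j\bigr)=0$. Nothing you wrote explains why it holds, and the relations $Xf=f(X+b)$, $\alpha f=f\rho^m(\alpha)$ do not visibly yield it. The $B$-part is the easy one once $x$-commutation is known. For $e$ commuting with $x$, $d_\alpha=\alpha e-e\alpha$ satisfies $d_\alpha x=xd_{\rho(\alpha)}+d_{D(\alpha)}$, so vanishing of all $y_0$-coordinates propagates to all coordinates; but this does not help with $x$. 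Your fallback, as written, is also wrong: $(A\otimes_BA)^A$ is not a (twisted) $\mathrm{Hom}_{B\text{-}B}(A,A)$. Already for a field $B$ with $\rho=1$, $D=0$, the latter has dimension $m^2$ while $(A\otimes_BA)^A\cong A$ has dimension $m$.

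What closes the gap is a twisted Frobenius duality.
\begin{itemize}
\item Let $\lambda'\colon A\to B$ take the \emph{left} coefficient of $x^{m-1}$. Then $\lambda'(\beta a)=\beta\lambda'(a)$ and $\lambda'(a\gamma)=\lambda'(a)\rho^{-(m-1)}(\gamma)$. Also $\lambda'(x^iy_j)=\delta_{ij}$; for $i>j$ use $x^{j+1}y_j=-\sum_{k\le j}x^ka_k$.
\item Hence $\Psi(a\otimes c)(z)=\rho^{m-1}(\lambda'(za))\,c$ is well defined on $A\otimes_BA$. It is a bijection onto the additive maps $\phi\colon A\to A$ with $\phi(\gamma z)=\rho^{m-1}(\gamma)\phi(z)$, with inverse $\phi\mapsto\sum_j y_j\otimes\phi(x^j)$.
\item Since $\Psi(ae)(z)=\Psi(e)(za)$ and $\Psi(ea)(z)=\Psi(e)(z)a$, the element $e$ centralizes $A$ if and only if $\Psi(e)$ is left multiplication by $h=\Psi(e)(1)$. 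That map has the required semilinearity if and only if $\rho^{m-1}(\alpha)h=h\alpha$.
\end{itemize}
Thus $(A\otimes_BA)^A=\{\sum_j y_j\otimes hx^j \mid \rho^{m-1}(\alpha)h=h\alpha\ (\alpha\in B)\}$. This gives both directions at once and shows that $hx^m+\sum_{j<m} a_jhx^j=0$ is automatic.
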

Recently in \cite{YI1}, the author and S. Ikehata gave alternative  proofs  of  the above proposition  
in $B[X;\rho]$ and $B[X;D]$, respectively. 
In addition, 
S. Ikehata gave some refinements and sharpenings of Miyashita's results concerning separable polynomials in skew polynomial rings 
(cf. \cite{I1}, \cite{I2}, \cite{I00}, \cite{I000}).  
  
In \cite{HN}, N. Hamaguchi and A. Nakajima studied weakly separable polynomials over a commutative ring. 
They also studied weakly separable polynomials and weakly quasi-separable polynomials in  skew polynomial rings 
$B[X;\rho]$ and $B[X;D]$ when $B$ is an integral domain. 
The purpose of this paper is to give some refinements and  generalizations of their results which were obtained in \cite{HN}.

In section 2,  we treat weakly separable polynomials over a commutative ring. 
As mentioned above,  a separable polynomial $f(X)$ in $B[X]$ has a close relationship with 
the invertibilities of its derivative $f'(X)$ and   its discriminant $\delta(f(X))$. 
We shall characterize the weakly separability of $f(X)$ in terms of the properties of $f'(X)$ and $\delta(f(X))$.

In section 3, we study weakly separable polynomials and weakly quasi-separable polynomials 
in skew polynomial rings. 
When $B$ is an integral domain, N. Hamaguchi and A. Nakajima gave 
necessary and sufficient conditions for weakly separable polynomials in $B[X;\rho]$ and $B[X;D]$ 
(cf. \cite[Theorem 4.1.4 and Theorem 4.2.3]{HN}).  
We shall try to give sharpenings of their results  for a noncommutative coefficient ring $B$. 
Moreover, we shall study the relationship between the separability and the  weakly separability  
in skew polynomial rings $B[X;\rho]$ and $B[X;D]$, respectively.

\smallskip

\section{Weakly separable polynomials  over a commutative ring}
In this section, we shall study  weakly separable polynomials over a commutative ring. 
It is well known that a (noncommutative) ring extension $A/B$ is separable 
if and only if there exists $\sum_j x_j \otimes y_j \in (A \otimes_B A)^A$ such that 
$\sum_j x_jy_j=1$, where $(A\otimes_BA)^A=\{ \mu  \in A\otimes_BA \, | \, x \mu = \mu x \ {\rm for \ any} \ x \in A\}$
 (cf. \cite[Definition 2]{HS}). 
First we shall prove the following.
\begin{lem}\label{lem1}
Let $A/B$ be a commutative ring extension. 
If  there exists $\sum_j x_j \otimes y_j \in (A \otimes_B A)^A$ such that 
$\sum_j x_jy_j$ is  a non-zero-divisor in $A$, 
then $A/B$ is weakly separable.  
\end{lem}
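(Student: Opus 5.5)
Since $A$ is commutative, an inner derivation $x\mapsto mx-xm$ of $A$ to $A$ is identically zero. So I will show directly that every $B$-derivation $\delta$ of $A$ to $A$ is zero. That is exactly the statement that every such derivation is inner, which is weak separability. The idea is to evaluate $\delta$ against the given element $\sum_j x_j\otimes y_j\in(A\otimes_BA)^A$ through a suitable map on $A\otimes_BA$. The defining commutation property then produces the relation $c\,\delta(x)=0$, where $c=\sum_j x_jy_j$.

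\emph{Step 1: a well-defined map.} Fix a $B$-derivation $\delta$ of $A$ to $A$. Define $\phi:A\otimes_BA\to A$ by $\phi(a\otimes b)=a\delta(b)$. It is well defined because $\delta$ is left $B$-linear: for $\beta\in B$ we have $\delta(\beta b)=\delta(\beta)b+\beta\delta(b)=\beta\delta(b)$. Hence $\phi(a\beta\otimes b)=a\beta\delta(b)=\phi(a\otimes\beta b)$. Also $\phi$ is additive and left $A$-linear, i.e.\ $\phi(x\mu)=x\phi(\mu)$.

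\emph{Step 2: apply $\phi$ to the commutation relation.} Put $\mu=\sum_j x_j\otimes y_j$ and take any $x\in A$. Since $x\mu=\mu x$, applying $\phi$ to both sides gives
\begin{equation*}
x\sum_j x_j\delta(y_j)=\sum_j x_j\delta(y_jx)=\sum_j x_j\delta(y_j)\,x+\sum_j x_jy_j\,\delta(x).
\end{equation*}
Because $A$ is commutative, the first term on the right cancels the left-hand side. This leaves $c\,\delta(x)=0$.

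\emph{Step 3: conclude.} Since $c$ is a non-zero-divisor in $A$, we get $\delta(x)=0$ for all $x\in A$. Thus $\delta=0$, which is inner, so $A/B$ is weakly separable.

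The only point needing care is the well-definedness of $\phi$ over $\otimes_B$ in Step 1, which rests on $\delta(B)=0$. Everything after that is a direct computation.
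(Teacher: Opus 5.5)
Your proposal is correct and follows the paper's proof essentially verbatim: the paper also applies the map $x\otimes y\mapsto xD(y)$ to the relation $\alpha\mu=\mu\alpha$, obtains $\sum_j x_jy_jD(\alpha)=0$, and cancels the non-zero-divisor. Your explicit check that this map is well defined over $\otimes_B$ (using $\delta(B)=0$), and your remark that inner derivations of a commutative ring vanish, are details the paper leaves implicit.
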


\begin{proof} 
Let $D$ be a $B$-derivation of $A$, and 
$\sum_j x_j \otimes y_j$ in $(A \otimes_B A)^A$ such that $\sum_j x_jy_j$ is  a non-zero-divisor in $A$. 
Then we consider the following $A$-$B$-homomorphisms :
\begin{center}
\begin{tabular}{ccccc}
$A \otimes_B A$  & $\longrightarrow$ & $A \otimes_BA$ & $\longrightarrow$ & $A$\\

$x\otimes y$ &  $\longmapsto$ & $x\otimes D(y)$ & $\longmapsto$ & $xD(y)$
\end{tabular}
\end{center}
Since $\alpha\sum_jx_j \otimes y_j = \sum_jx_j \otimes y_j \alpha$ for any $\alpha \in A$, we have 
\begin{align*}
\alpha\sum_j x_j D(y_j) = \sum_j x_j D(y_j \alpha) = \sum_j x_jD(y_j)\alpha + \sum_j x_j y_j D(\alpha), 
\end{align*}
and hence $\sum_j x_j y_j D(\alpha)=0$. 
Since $\sum_j x_j y_j$ is a non-zero-divisor in $A$, we obtain $D(\alpha)=0$. 
Therefore $A/B$ is weakly separable. 
This completes the proof. 
\end{proof}

\smallskip

\begin{exm}{\rm
Let $B$ be a ring, $G$ a finite group of order $n$, and $A=B[G]$, that is,  $A$ is a group ring of $G$ over $B$.    
Then it is easily seen that $\sum_{g \in G} g \otimes g^{-1} \in (A \otimes_B A)^A$, 
and hence   $A/B$ is separable  if $n \, (=\sum_{g \in G} gg^{-1})$ is an invertible element. 
When $B$ is commutative and $G$ is abelian, if $n$ is a non-zero-divisor in $A$  
then  $A/B$ is weakly separable   by Lemma \ref{lem1}. 
Moreover, it is also true when $B$ is  noncommutative  and $G$ is abelian.  
In fact, for any $B$-derivation $D$ of $A$ and   $\alpha \in A$, we see that 
$$
\alpha \sum_{g \in G} g D(g^{-1}) = \sum_{g \in G} g D(g^{-1}\alpha)= \sum_{g \in G} g D(g^{-1}) \alpha + \sum_{g \in G} g g^{-1} D(\alpha).
$$
Noting that $D(g^{-1})$ is in the center of $A$, we have $nD(\alpha)=0$. 
Hence if $n$ is a non-zero-divisor in $A$ then $D=0$ . 
}\end{exm}

\smallskip

Now, let $B$ be a commutative ring. %, and $f(X)$ a polynomial in $B[X]$. 
For a monic polynomial $f(X) \in B[X]$, $f'(X)$ and $\delta(f(X))$ will mean the derivative of  $f(X)$
and the discriminant of $f(X)$, respectively.   %and $f(X)=X^m + \sum_{j=0}^{m-1}X^j a_j \in B[X]$. 
Then a separable polynomial $f(X)$ in $B[X]$ 
is characterized  as Proposition \ref{N}  by using $f'(X)$ and $\delta(f(X))$.    
In \cite{HN}, N. Hamaguchi and A. Nakajima studied the weakly separability of a polynomial $f(X)=X^m-Xa-b$ in $B[X]$.  
They proved that 
$f(X)=X^m-Xa-b$ is weakly separable in $B[X]$ if and only if 
$\delta(f(X))$ is  a non-zero-divisor in $B$, or equivalently, 
$f'(X)$ is  a  non-zero-divisor in $B[X]$ modulo $(f(X))$ (cf \cite[Theorem 3.1 and Corollary 3.2]{HN}). 
Now we shall give a sharpening of their result. %for a general polynomial $f(X)$.    
\begin{thm}
Let $B$ be a commutative ring, and $f(X)$ a monic polynomial in $B[X]$. 
 The following are equivalent.\par
$(1)$ $f(X)$ is weakly separable in $B[X]$.\par 
$(2)$ $f'(X)$ is a non-zero-divisor in $B[X]$ modulo $(f(X))$.\par
$(3)$ $\delta(f(X))$ is  a non-zero-divisor in $B$.
\end{thm}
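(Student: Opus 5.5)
Write $A=B[X]/(f(X))$ and $x=X+(f(X))$. Then $A$ is commutative and free over $B$ with basis $1,x,\dots,x^{m-1}$, and I work throughout in this concrete setting.

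\emph{Derivations of $A$.} A $B$-derivation $D$ of $A$ to $A$ is determined by $D(x)=u$. Applying $D$ to $f(x)=0$ gives $f'(x)u=0$. Conversely, for any $u\in A$ with $f'(x)u=0$, the map $g(x)\mapsto g'(x)u$ is well defined: if $g=fq$ then $g'(x)u=f'(x)q(x)u=0$. It is clearly a $B$-derivation. Since $A$ is commutative, every inner derivation is zero. Hence $A/B$ is weakly separable if and only if the annihilator of $f'(x)$ in $A$ is zero. This is exactly (1)$\Leftrightarrow$(2). Note that this direction does not even need Lemma \ref{lem1}; alternatively, one could use the element $\sum_j x^j\otimes y_j\in(A\otimes_BA)^A$ with $\sum_j x^jy_j=f'(x)$, as in Proposition \ref{M}, and apply Lemma \ref{lem1} for (2)$\Rightarrow$(1).

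\emph{Comparing with the discriminant.} For (2)$\Leftrightarrow$(3), let $N=N_{A/B}$ denote the determinant of multiplication on the free module $A$. Up to sign, $\delta(f)=N(f'(x))$; this is the standard resultant identity $\delta(f)=\pm\operatorname{Res}(f,f')=\pm N(f'(x))$, which holds over any commutative ring by specializing the universal case. So the statement reduces to the following claim: for $a\in A$, multiplication by $a$ is injective on $A\cong B^m$ if and only if $\det(\mu_a)$ is a non-zero-divisor in $B$. This is McCoy's theorem: an $m\times m$ matrix over a commutative ring has trivial kernel on $B^m$ iff its determinant is a non-zero-divisor.

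\emph{Proving McCoy's theorem.} One direction is easy. If $\det M$ is a non-zero-divisor and $Mv=0$, then $\operatorname{adj}(M)Mv=(\det M)v=0$, so $v=0$. The other direction is the main obstacle: if $\det M\cdot c=0$ with $c\neq0$, we must produce a nonzero kernel vector. I would follow McCoy's argument. Take the largest $r$ such that some nonzero $c$ annihilates all $r\times r$ minors fails to hold, i.e. choose $r$ maximal with $\mathrm{Ann}(I_r(M))\neq 0$ failing appropriately. Then pick $c\neq0$ killing $I_{r+1}$ but not some $r\times r$ minor. Now form the vector of signed $r\times r$ cofactors multiplied by $c$; by Laplace expansion it lies in the kernel and is nonzero. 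Since this is classical, in the write-up I would simply cite it.

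Combining the three steps gives (1)$\Leftrightarrow$(2)$\Leftrightarrow$(3).
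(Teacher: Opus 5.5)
Your proof is correct, and it differs from the paper's in two places.

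For (1)$\Leftrightarrow$(2), the paper proves (1)$\Rightarrow$(2) exactly as you do. For (2)$\Rightarrow$(1), however, it invokes Lemma 2.1 together with the description $(A\otimes_BA)^A=\{\sum_j y_jh\otimes x^j \mid h\in A\}$ from [YI1], using that $\sum_j y_j\otimes x^j$ is invariant with $\sum_j y_jx^j=f'(x)$. You instead identify the $B$-derivations of $A$ directly with $\operatorname{Ann}_A(f'(x))$, via $D\mapsto D(x)$ and $g(x)\mapsto g'(x)u$. That settles both directions at once and needs neither the lemma nor the computation of $(A\otimes_BA)^A$. What the paper's detour buys is a conceptual parallel with the separable case: an invariant element with $\sum_j x_jy_j=1$ there, versus one with $\sum_j x_jy_j$ a non-zero-divisor here. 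It also yields a lemma that applies beyond polynomial extensions, e.g.\ to the group rings in the Example.

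For (2)$\Leftrightarrow$(3), the paper gives no argument and simply cites Nagahara's Theorem 1.3 in [N1]. Your reduction through the universal identity $\delta(f)=\pm N_{A/B}(f'(x))$ and McCoy's theorem is a valid self-contained substitute.

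Only the wording of your McCoy sketch needs cleaning. Set $I_0=B$ and let $I_k$ be the ideal of $k\times k$ minors.
\begin{itemize}
\item Take $r$ largest with $\operatorname{Ann}_B(I_r)=0$, so $r<m$.
\item Pick $c\neq0$ with $cI_{r+1}=0$, and an $r\times r$ minor $\Delta$ with $c\Delta\neq0$.
\item Add one more column to $\Delta$ and one arbitrary extra row, and expand the resulting $(r+1)\times(r+1)$ determinants along that row.
\end{itemize}
This yields a kernel vector whose coordinate $\pm c\Delta$ is nonzero.
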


\begin{proof} It is already known  that (2) and (3) are equivalent by \cite[Theorem 1.3]{N1}. 
We shall show that (1) and (2) are equivalent. 
Let $f(X)=X^m+X^{m-1}a_{m-1}+\cdots+Xa_1+a_0$ be in $B[X]$, $A=B[X]/(f(X))$, $x=X+(f(X))$, and $f'(x)=f'(X)+(f(X)) \in A$.\par
$(2) \Longrightarrow (1).$ It follows from \cite[Lemma 2.1]{YI1} (or \cite[Lemma 3.1]{YI1}) that 
$$
(A \otimes_B A)^A=\{ \sum_{j=0}^{m-1} y_j h \otimes x^j \, | \, h \in A\},
$$ 
where 
$y_j = x^{m-j-1} + x^{m-j-2}a_{m-1} + \cdots + xa_{j+2} + a_{j+1} \  (0 \leq j \leq m-2)$ 
and $y_{m-1}=1$. 
In particular, we see that $\sum_{j=0}^{m-1} y_j \otimes x^j \in (A \otimes_B A)^A$. 
Noting that $f'(x)=\sum_{j=0}^{m-1}y_jx^j$ and $f'(x)$ is a non-zero-divisor in $A$, 
$f(X)$ is weakly separable in $B[X]$ by Lemma \ref{lem1}.

$(1) \Longrightarrow (2).$ Assume that $f'(x)g(x)=0$ for some $g(x) \in A$. 
We can then construct a $B$-derivation $D$ of $A$ such that  $D(x)=g(x)$
because 
 $D(f(x))=f'(x)D(x)=f'(x)g(x)=0$. 
Since $f(X)$ is weakly separable, we have $g(x)=0$.  
This completes the proof. 
\end{proof}

\smallskip

\section{Weakly separable polynomials in skew polynomial rings}
In \cite{HN}, N. Hamaguchi and A. Nakajima characterized weakly separable polynomials and 
weakly quasi-separable polynomials in skew polynomial rings $B[X;\rho]$ and $B[X;D]$ 
when $B$ is an integral domain. 
In this section, we shall generalize their results 
for a noncommutative coefficient ring $B$. 

We shall use the following conventions :\par
$Z=$ the center of $B$\par
$V_A(B)=$ the centralizer of  $B$ in $A$\par 
$B^\rho=\{ \alpha \in B \, | \, \rho(\alpha)=\alpha \}$\par% and $Z^\rho=\{ z \in Z \, | \, \rho(z)=z \}$\par
$B^D=\{ \alpha \in B \, | \, D(\alpha)=0 \}$ and $Z^D=Z \cap B^D$\par
$D(B)=\{ D(\alpha) \, | \, \alpha \in B \}$

\subsection{Automorphism type} 
In this section, we consider a polynomial $f$  in  $B[X;\rho]_{(0)}$ %$B[X;\rho]_{(0)} \cap B^{\rho}[X]$
  of the form 
\begin{align*}
f=X^m+X^{m-1}a_{m-1} + \cdots + Xa_1+a_0=\sum_{j=0}^{m} X^j a_j \  \ (a_m=1, \ m \geq 2).
\end{align*}
We set $A=B[X;\rho]/fB[X;\rho]$, and $x=X+fB[X;\rho] \in A$.  
By \cite[Lemma 1.3]{I1}, $f$ is in $B[X;\rho]_{(0)}$ if and only if 
\begin{align*}
\left\{\begin{array}{l}
\alpha a_j = a_j \rho^{m-j}(\alpha) \ \ (\alpha \in B, \ 0 \leq j \leq m-1),\\
\rho(a_j)-a_j = a_{j+1} (\rho(a_{m-1})-a_{m-1}) \ (0 \leq j \leq m-2),\\
a_0(\rho(a_{m-1})-a_{m-1})=0.
\end{array}\right.
\end{align*}
Now, we let $f$ be in $B[X;\rho]_{(0)} \cap B^{\rho}[X]$. 
Then there is  an automorphism $\tilde{\rho}$ of $A$ which is naturally induced by $\rho$, 
that is, $\tilde{\rho}$ is defined by 
$\tilde{\rho}(\sum_{j=0}^{m-1}x^j c_j)=\sum_{j=0}^{m-1}x^j\rho(c_j)$.   
We write $J_{\rho^k}=\{ h \in A \, | \, \alpha h = h \rho^k(\alpha) \ (\alpha \in B)\}$ $(k \geq 1)$, 
$V=V_A(B)$, and $V^{\tilde{\rho}}=\{ h \in V \, | \, \tilde{\rho}(h)=h\}$. 
Then we consider a $V^{\tilde{\rho}}$-$V^{\tilde{\rho}}$-homomorphism  $\tau : J_\rho \longrightarrow J_{\rho^m}$ 
defined by  
\begin{align*}
\tau(h) &=x^{m-1}\sum_{j=0}^{m-1}\tilde{\rho}^{j}(h) + x^{m-2} \sum_{j=0}^{m-2}\tilde{\rho}^j(h)a_{m-1}
 + \cdots x\big\{\tilde{\rho}(h)+h\big\}a_2 + h a_{1}\\
&= \sum_{k=0}^{m-1}x^k \sum_{j=0}^{k}\tilde{\rho}^{j}(h)a_{k+1}. %\  \ (a_m=1).
\end{align*}

First we shall prove the following.  

\begin{lem}\label{lemr1}
If $\delta$ is a $B$-derivation of $A$, then $\delta(x) \in J_\rho$ and $\tau(\delta(x)) =
0$. 
Conversely, if $g \in J_\rho$ with $\tau(g) = 0$, then there exists a $B$-derivation
$\delta$ of $A$ such that $\delta(x) = g$.
\end{lem}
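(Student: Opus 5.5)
The plan is to exploit the fact that $A$ is a free $B$-module with basis $1,x,\dots,x^{m-1}$. This means a $B$-derivation of $A$ is determined by its value on $x$. The whole lemma then reduces to computing $\delta(f(x))$ in terms of $\delta(x)$.

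\emph{Forward direction.} Let $\delta$ be a $B$-derivation of $A$. For $\alpha\in B$ we have $\alpha x = x\rho(\alpha)$ in $A$. Applying $\delta$ and using $\delta(\alpha)=\delta(\rho(\alpha))=0$ gives $\alpha\delta(x)=\delta(x)\rho(\alpha)$, so $\delta(x)\in J_\rho$.

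Next, write $g=\delta(x)$. An induction on $k$, using the Leibniz rule, gives
\[
\delta(x^k)=\sum_{j=0}^{k-1}x^{k-1-j}\,g\,x^{j}.
\]
The key commutation rule is that for $h\in J_\rho$ one has $hx=x\tilde\rho(h)$. To check this, write $h=\sum_i x^i c_i$. The condition $\alpha h=h\rho(\alpha)$ gives $c_i$ relations, and $c_i x = x\rho(c_i)$ together with $f\in B^\rho[X]$ shows $\tilde\rho$ is compatible with reduction modulo $f$. Hence $x^{k-1-j}gx^j=x^{k-1}\tilde\rho^{j}(g)$, and therefore
\[
\delta(x^k)=x^{k-1}\sum_{j=0}^{k-1}\tilde\rho^j(g).
\]
Since $a_k\in B$ and $\delta(a_k)=0$, it follows that
\[
0=\delta(f(x))=\sum_{k=1}^{m}\delta(x^k)a_k=\sum_{k=0}^{m-1}x^{k}\sum_{j=0}^{k}\tilde\rho^j(g)a_{k+1}=\tau(g).
\]

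\emph{Converse.} Given $g\in J_\rho$ with $\tau(g)=0$, first define a map $\Delta$ on $B[X;\rho]$ by $\Delta(\sum X^k c_k)=\sum_k \bar\delta(x^k)c_k$. Here $\bar\delta(x^k)=x^{k-1}\sum_{j<k}\tilde\rho^j(g)$, and the target is $A$, viewed as a $B[X;\rho]$-bimodule via the projection. Then check that $\Delta$ is a $B$-derivation on $B[X;\rho]$. Right $B$-linearity holds by construction. For left multiplication by $\alpha\in B$, use $g\in J_\rho$: we get $\alpha x^{k-1}\tilde\rho^j(g)=x^{k-1}\rho^{k-1}(\alpha)\tilde\rho^j(g)$, and since $\tilde\rho^j(g)\in J_\rho$ this moves correctly to give $x^{k-1}\tilde\rho^j(g)\rho^k(\alpha)$, consistent with $\alpha X^k=X^k\rho^k(\alpha)$. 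The Leibniz rule for $X^k\cdot X^l$ follows from the formula for $\bar\delta(x^k)$ by splitting the sum over $j$.

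Then $\Delta(f)=\tau(g)=0$, and $\Delta(fu)=\Delta(f)u+f\Delta(u)=0$ in $A$. So $\Delta$ vanishes on the ideal $fB[X;\rho]$ and descends to a $B$-derivation $\delta$ of $A$ with $\delta(x)=g$.

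The main obstacle is the identity $hx=x\tilde\rho(h)$ for $h\in J_\rho$, together with the well-definedness of $\tilde\rho$ on $A$. This is where the hypotheses $f\in B[X;\rho]_{(0)}\cap B^\rho[X]$ enter. I would verify it on a basis element $x^ic$, using the relation $\alpha a_j=a_j\rho^{m-j}(\alpha)$, and check that reduction of $x^m$ commutes with $\tilde\rho$ because the $a_j$ are $\rho$-fixed.
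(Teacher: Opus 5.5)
Your proposal is correct and follows the paper's argument. The forward direction is the same computation $\delta(x^k)=x^{k-1}\sum_{j=0}^{k-1}\tilde\rho^j(\delta(x))$, giving $\delta(f(x))=\tau(\delta(x))$. For the converse you, like the paper, extend $X\mapsto g$ to a $B$-derivation on $B[X;\rho]$ and use $\tau(g)=0$ to kill $fB[X;\rho]$. The only variation is that you take values in $A$ rather than lifting $g$ to some $g_0\in B[X;\rho]$, which avoids the paper's unstated choice of a lift of degree $<m$ satisfying $\alpha g_0=g_0\rho(\alpha)$ exactly.

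The step you flag as the main obstacle is actually immediate: $hx=x\tilde\rho(h)$ holds for every $h\in A$ directly from $cx=x\rho(c)$. The hypothesis $f\in B^\rho[X]$ is needed only to make $\tilde\rho$ a ring automorphism, which is what you use to get $\tilde\rho^j(g)\in J_\rho$.
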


\begin{proof} 
Let $\delta$ be a $B$-derivation of $A$. 
It can be easily seen that $\alpha \delta(x)=\delta(x)\rho(\alpha)$ for any $\alpha \in B$. 
Since $\delta(x^k)=x^{k-1}\sum_{i=0}^{k-1}\tilde{\rho}^i(\delta(x))$  $(k \geq 2)$, 
we have 
$$
0=\delta(\sum_{k=0}^{m}x^k a_k)=\sum_{k=0}^{m-1} \delta(x^{k+1}) a_{k+1} = 
\sum_{k=0}^{m-1}  x^k\sum_{j=0}^{k}\tilde{\rho}^j(\delta(x)) a_{k+1} 
= \tau(\delta(x)).
$$

Conversely, Let $g=g_0 +fB[X;\rho]$ $(g_0 \in B[X;\rho])$ be in $J_\rho$ such that $\tau(g)=0$.  
Since $\alpha g_0 = g_0 \rho(\alpha)$ $(\alpha \in B)$, 
we can define a $B$-derivation $\delta^*$ of $B[X;\rho]$ such that  $\delta^*(X)=g_0$. 
Moreover, since $\tau(g)=0$, it is easy to see that $\delta^*(f) \in fB[X;\rho]$. 
Hence there is a $B$-derivation $\delta$ of $A$ such that $\delta(x)=g$  
which is naturally  induced by $\delta^*$. 
This complete the proof. 
\end{proof}

\smallskip

Now we shall give a sharpening of \cite[Theorem 4.1.4]{HN} 
\begin{thm}\label{thmr1}
Let $f=X^m+X^{m-1}a_{m-1} + \cdots + Xa_1+a_0$ be in $B[X;\rho]_{(0)} \cap B^{\rho}[X]$. 
Then $f$ is weakly separable in $B[X;\rho]$ if and only if 
$$
\{ g \in J_\rho \, | \, \tau(g)= 0\} =\{ x(\tilde{\rho}(h)-h) \, | \, h \in V  \}.
$$
\end{thm}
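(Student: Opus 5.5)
Lemma \ref{lemr1} turns the statement into a comparison between two subsets of $J_\rho$. A $B$-derivation $\delta$ of $A$ is determined by $\delta(x)$, because $A$ is generated over $B$ by $x$. By Lemma \ref{lemr1}, $\delta \mapsto \delta(x)$ is then a bijection from the set of $B$-derivations of $A$ onto $\{ g \in J_\rho \mid \tau(g)=0\}$. So the task is to show that the image of the inner $B$-derivations under this map is exactly $\{ x(\tilde{\rho}(h)-h) \mid h \in V\}$. Once that is done, "every $B$-derivation is inner" is equivalent to the displayed equality, and both implications follow at once.

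For the inner derivations, let $\delta = \delta_h$ with $\delta_h(y) = hy - yh$ for some $h \in A$. This map is a derivation of $A$, and it vanishes on $B$ exactly when $h \in V = V_A(B)$. Hence the inner $B$-derivations of $A$ are precisely the $\delta_h$ with $h \in V$. For such $h$ we need to compute $\delta_h(x) = hx - xh$.

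The main step is the identity $hx = x\tilde{\rho}(h)$ for $h \in V$. I would first prove $cx = x\tilde{\rho}(c)$ for every $c \in A$. For $c = \sum_j x^j c_j$ with $c_j \in B$, we have $c_j x = x\rho(c_j)$ in $B[X;\rho]$, since $\alpha X = X\rho(\alpha)$. Therefore
$$cx = \sum_j x^{j+1}\rho(c_j) = x\sum_j x^j \rho(c_j) = x\tilde{\rho}(c).$$
Here $\tilde{\rho}$ is well defined because $f \in B^\rho[X]$, and the identity survives reduction modulo $f$ because $\tilde{\rho}$ is induced by the automorphism $\sum X^j c_j \mapsto \sum X^j\rho(c_j)$ of $B[X;\rho]$, which fixes $f$ and hence preserves $fB[X;\rho]$. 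The hypothesis $h \in V$ is not needed for this identity; it is only needed so that $\delta_h$ vanishes on $B$. It follows that $\delta_h(x) = x\tilde{\rho}(h) - xh = x(\tilde{\rho}(h) - h)$. This shows that the image of the inner $B$-derivations is the right-hand set, and also confirms that this set lies in $\{ g \in J_\rho \mid \tau(g)=0\}$.

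To conclude, suppose first that $f$ is weakly separable. Take $g \in J_\rho$ with $\tau(g)=0$. Lemma \ref{lemr1} gives a $B$-derivation $\delta$ with $\delta(x)=g$, and by hypothesis $\delta = \delta_h$ for some $h$, which lies in $V$ because $\delta$ vanishes on $B$. Then $g = x(\tilde{\rho}(h)-h)$, so the left-hand set is contained in the right-hand set; the reverse inclusion was shown above. Conversely, suppose the two sets are equal, and let $\delta$ be any $B$-derivation of $A$. By Lemma \ref{lemr1}, $\delta(x) \in J_\rho$ and $\tau(\delta(x))=0$, so $\delta(x) = x(\tilde{\rho}(h)-h) = \delta_h(x)$ for some $h \in V$. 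Two $B$-derivations that agree on $x$ agree on all of $A$, so $\delta = \delta_h$ is inner. The only step needing care is the commutation identity $cx = x\tilde{\rho}(c)$ and the well-definedness of $\tilde{\rho}$, and these rest on $f \in B^\rho[X]$.
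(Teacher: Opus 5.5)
Your proof is correct and follows the paper's argument for both implications: Lemma~\ref{lemr1} combined with the identity $hx-xh=x(\tilde{\rho}(h)-h)$, which comes from $hx=x\tilde{\rho}(h)$. The one difference is how you get the inclusion $\{x(\tilde{\rho}(h)-h)\mid h\in V\}\subseteq\{g\in J_\rho\mid \tau(g)=0\}$. You obtain it by applying the first half of Lemma~\ref{lemr1} to the inner $B$-derivation $w\mapsto hw-wh$, whereas the paper verifies $\tau(x(\tilde{\rho}(h)-h))=0$ by direct computation from $hx^k=x^k\tilde{\rho}^k(h)$ and $x^m=-\sum_{k<m}x^ka_k$, so your route is slightly shorter.
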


\begin{proof} Assume that 
$\{ g \in J_\rho \, | \, \tau(g)= 0\} =\{ x(\tilde{\rho}(h)-h) \, | \, h \in V  \}$, 
and let $\delta$ be a $B$-derivation of $A$. 
Then it follows from Lemma \ref{lemr1} that  
$\delta(x) \in\{ g \in J_\rho \, | \, \tau(g)= 0\}$, and hence 
$\delta(x)=x(\tilde{\rho}(h)-h)=hx-xh$ for some $h \in V$. 
Then it is easy to see that $\delta(w) = hw-wh$ for any $w \in A$. 
Therefore $\delta$ is inner. 

Conversely, assume that $f$ is weakly separable, and 
let $g$ be an element in $J_\rho$ such that  $\tau(g)=0$. 
Then we can define a $B$-derivation $\delta$ of $A$ such that $\delta(x)=g$ by Lemma \ref{lemr1}.  
Since $f$ is weakly separable, we obtain 
$g=\delta(x)=hx-xh=x(\tilde{\rho}(h)-h)$ for some $h \in V$. 
Thus $\{ g \in J_\rho \, | \, \tau(g)= 0\} \subset \{ x(\tilde{\rho}(h)-h) \, | \, h \in V  \}$. 
It is easy to see that   $\{ g \in J_\rho \, | \, \tau(g)= 0\} \supset \{ x(\tilde{\rho}(h)-h) \, | \, h \in V  \}$. 
In fact, generally $hx^k=x^k\tilde{\rho}^{k}(h)$ $(k \geq 1)$ for any $h \in A$. 
This implies  
$\sum_{k=0}^{m-1}x^k ( \tilde{\rho}^k(h) -\tilde{\rho}^m (h))a_k =0 $ for any $h \in V$, and hence we have 
\begin{align*}
\tau(x(\tilde{\rho}(h)-h)) &= \sum_{k=0}^{m-1}x^k \sum_{j=0}^k \tilde{\rho}^j (x(\tilde{\rho}(h)-h)) a_{k+1}\\
&= x^m \sum_{j=0}^{m-1}\tilde{\rho}^j (\tilde{\rho}(h)-h) + \sum_{k=0}^{m-2}x^{k+1}\sum_{j=0}^{k}\tilde{\rho}^j (\tilde{\rho}(h)-h)a_{k+1}\\
&= (-\sum_{k=0}^{m-1}x^ka_k)(\tilde{\rho}^m(h)-h) + \sum_{k=1}^{m-1}x^{k}(\tilde{\rho}^k(h)-h)a_{k}\\
&= \sum_{k=0}^{m-1} x^k (\tilde{\rho}^k(h) - \tilde{\rho}^m(h) )a_{k}\\
&= 0. 
\end{align*}
This completes the proof. 
\end{proof}

\smallskip

As a direct consequence of Theorem \ref{thmr1}, we have the following. 

\begin{cor}\cite[Theorem 4.1.4 (ii)]{HN}
Let $B$ be an integral domain, $m$ the order of $\rho$, and $f=X^m-u$ $(u \neq 0)$ in $B[X;\rho]_{(0)}$. 
Then $f$ is weakly separable in $B[X;\rho]$ if and only if 
$$
\{ b \in B\ \, | \, \sum_{j=0}^{m-1}\rho^j(b)=0 \} = \{ \rho(c)-c \, | \, c \in B \}. 
$$
\end{cor}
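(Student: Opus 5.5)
The plan is to specialize Theorem \ref{thmr1} to $f=X^m-u$ and compute both sides explicitly. Here $B$ is an integral domain (so commutative) and $\rho$ has order $m$. Thus $a_0=-u$ and $a_1=\cdots=a_{m-1}=0$, and $f\in B^\rho[X]$ because $f\in B[X;\rho]_{(0)}$ forces $\alpha u=u\rho^m(\alpha)$ and $\rho(a_0)-a_0=a_1(\rho(a_{m-1})-a_{m-1})=0$, so $\rho(u)=u$. Theorem \ref{thmr1} therefore applies, and weak separability of $f$ is equivalent to
$$
\{ g \in J_\rho \, | \, \tau(g)=0\}=\{ x(\tilde{\rho}(h)-h) \, | \, h\in V\}.
$$
The whole task is to identify $J_\rho$, $V$, and $\tau$ concretely.

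First compute $J_{\rho^k}$. Write $h=\sum_{i=0}^{m-1}x^ic_i$. Since $\alpha x^i=x^i\rho^i(\alpha)$ and $B$ is commutative, the condition $\alpha h=h\rho^k(\alpha)$ becomes $c_i(\rho^i(\alpha)-\rho^k(\alpha))=0$ for all $\alpha$. Because $\rho$ has order exactly $m$, for $i\not\equiv k \pmod m$ there is an $\alpha$ with $\rho^i(\alpha)\neq\rho^k(\alpha)$. As $B$ is a domain, this forces $c_i=0$. Hence $J_\rho=xB$ and $V=J_{\rho^m}=B$ (using $\rho^m=1$). This is the step where both the domain hypothesis and the order-$m$ hypothesis are used, and I expect it to be the main point needing care. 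Since $\tilde{\rho}$ restricted to $B$ is $\rho$, the right-hand side becomes $\{x(\rho(c)-c)\mid c\in B\}$.

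Next compute $\tau$ on $g=xb$ with $b\in B$. Only $a_m=1$ survives in the formula for $\tau$, so
$$
\tau(xb)=x^{m-1}\sum_{j=0}^{m-1}\tilde{\rho}^j(xb)=x^{m-1}\sum_{j=0}^{m-1}x\rho^j(b)=x^m\sum_{j=0}^{m-1}\rho^j(b)=u\sum_{j=0}^{m-1}\rho^j(b).
$$
Since $u\neq0$ lies in the domain $B$ and $B\subset A$ is free, this vanishes iff $\sum_{j=0}^{m-1}\rho^j(b)=0$. The left-hand side is therefore $\{xb\mid \sum_j\rho^j(b)=0\}$.

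Finally, $A$ is free over $B$ with basis $1,x,\dots,x^{m-1}$, so $xb\mapsto b$ is injective on $xB$. The two sets coincide exactly when $\{b\mid\sum_{j}\rho^j(b)=0\}=\{\rho(c)-c\mid c\in B\}$, which gives the corollary.
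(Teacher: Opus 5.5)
Your proposal is correct and follows the same route as the paper: apply Theorem \ref{thmr1}, identify $J_\rho = xB$ and $V = B$, and compute $\tau(xb) = u\sum_{j=0}^{m-1}\rho^j(b)$. You also supply details the paper leaves implicit, namely checking that $\rho(u)=u$ (so $f \in B^\rho[X]$ and the theorem applies) and pointing out that the order-$m$ hypothesis is what forces $J_\rho = xB$ and $V = B$.
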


\begin{proof} Since $B$ is an integral domain, then one easily see that 
$J_\rho=\{ xb \, | \, b \in B \}$ and $V=B$. 
If $\tau(xb)=0$ for any $b \in B$, then  we have 
$$
0=\tau(xb)=x^{m-1} \sum_{j=0}^{m-1}\tilde{\rho}^{j}(xb) = u \sum_{j=0}^{m-1} \rho^j(b). 
$$
This means $\sum_{j=0}^{m-1}\rho^j(b)=0$. 
Hence we obtain the assertion by Theorem \ref{thmr1}. 
This completes the proof. 
\end{proof}

\smallskip
 
In virtue of Theorem \ref{thmr1}, we obtain the following theorem 
concerning the relationship between 
the separability and the weakly separability    
in $B[X;\rho]$. 

\begin{thm}\label{thmr2}
Let $m$ be the order of $\rho$, $f=X^m+X^{m-1}a_{m-1} + \cdots + Xa_1+a_0$  in $B[X;\rho]_{(0)} \cap B^{\rho}[X]$, 
 $C(A)$  a center of $A$,   
and  $I_x$  an inner derivation of $A$ by $x$ $($that is, $I_x(h) = hx-xh$ for any $h \in A)$.\par
$(1)$ $f$ is weakly separable in $B[X;\rho]$ if and only if 
the following sequence of $V^{\tilde{\rho}}$-$V^{\tilde{\rho}}$-homomorphisms is exact: 
$$
0 \longrightarrow
% {\rm Ker} \, x(\tilde{\rho}-1)
C(A)  \overset{{\rm inj}}{\longrightarrow}  V 
\overset{I_x}{\longrightarrow} J_\rho \overset{\tau}{\longrightarrow} V^{\tilde{\rho}}.
$$ 

$(2)$ $f$ is separable in $B[X;\rho]$ if and only if the following sequence 
of $V^{\tilde{\rho}}$-$V^{\tilde{\rho}}$-homomorphisms is exact: 
$$
0 \longrightarrow
% {\rm Ker} \, x(\tilde{\rho}-1)
C(A)  \overset{{\rm inj}}{\longrightarrow}  V 
\overset{I_x}{\longrightarrow} J_\rho \overset{\tau}{\longrightarrow} V^{\tilde{\rho}}
\longrightarrow 0.
$$
\end{thm}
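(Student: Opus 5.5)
We first make sure the sequence is well-defined; after that, (1) is a reformulation of Theorem \ref{thmr1}, and (2) needs Proposition \ref{M}.

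\emph{Well-definedness.} Take $h\in V$. Then $I_x(h)=hx-xh=x(\tilde{\rho}(h)-h)$. For $\alpha\in B$ we have $\alpha x=x\rho(\alpha)$. Also $\tilde{\rho}(h)\in V$, because $\alpha\tilde{\rho}(h)=\tilde{\rho}(\rho^{-1}(\alpha)h)=\tilde{\rho}(h\rho^{-1}(\alpha))=\tilde{\rho}(h)\alpha$. Hence $I_x(h)\in J_\rho$.

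For $g\in J_\rho$ we have $\tau(g)\in J_{\rho^m}$. Since $m$ is the order of $\rho$, $J_{\rho^m}=V$. To see that $\tau(g)$ is in fact $\tilde{\rho}$-fixed, we use $x\,\tau(g)=\tau(g)\,x$, i.e.\ $\tau(g)x=x\tilde{\rho}(\tau(g))$. I would check this by the same telescoping computation used at the end of the proof of Theorem \ref{thmr1}: multiply $\tau(g)$ by $x$ and reduce $x^m$ using $f$. If this turns out to be awkward, an alternative is to show that $\tau(g)$ commutes with $x$ directly from the identity $\tau(g)=\sum_j y_j g x^j$ in the notation of Proposition \ref{M}.

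Exactness at the first terms is immediate. The map $C(A)\to V$ is injective. An element $h\in V$ lies in $\ker I_x$ exactly when it commutes with $x$ and with $B$, and since $A$ is generated by $B$ and $x$, this means $h\in C(A)$. The composite $\tau\circ I_x$ is zero by the computation at the end of Theorem \ref{thmr1}. Thus $I_x(V)=\{x(\tilde{\rho}(h)-h)\mid h\in V\}\subset\ker\tau$ always holds. Exactness at $J_\rho$ is therefore the equality $\ker\tau=I_x(V)$, which by Theorem \ref{thmr1} is equivalent to weak separability. This proves (1).

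For (2), separability implies weak separability, so by (1) it suffices to show: $f$ is separable iff $\tau:J_\rho\to V^{\tilde{\rho}}$ is surjective, given that the rest of the sequence is exact. This is the main step.

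I would use Proposition \ref{M}: $f$ is separable iff there is $h\in J_{\rho^{m-1}}$ with $\sum_j y_jhx^j=1$. The key identity to establish is
\[
\tau(g)=\sum_{j=0}^{m-1}y_j\,h\,x^j \qquad\text{whenever } g=xh \text{ or } g=hx .
\]
Because $\rho^m=1$, the correspondence $h\mapsto hx$ is a bijection $J_{\rho^{m-1}}\to J_\rho$, with inverse given by multiplying by $x^{m-1}$ and using the relation $f$; I would check this carefully, and it may require $a_0$ to be invertible, in which case I would instead work with the trace-like map directly. The identity itself should follow by expanding $y_j$ and using $hx^k=x^k\tilde{\rho}^k(h)$.

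Granting the identity, the two directions go as follows. If $f$ is separable, the element $h$ from Proposition \ref{M} gives $g=hx\in J_\rho$ with $\tau(g)=1$. Then for $v\in V^{\tilde{\rho}}$, the $V^{\tilde{\rho}}$-linearity of $\tau$ gives $\tau(gv)=v$, so $\tau$ is surjective. Conversely, if $\tau$ is surjective, pick $g$ with $\tau(g)=1$ and obtain the required $h$, so $f$ is separable by Proposition \ref{M}.

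The obstacle I expect is exactly this matching between $\tau$ and the Miyashita element $\sum_j y_j h x^j$. If the direct matching fails, the fallback is to compare $\tau$ with the description of $(A\otimes_BA)^A$ from \cite{YI1}.
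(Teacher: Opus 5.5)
Part (1) of your proposal is fine and agrees with the paper. The gap is in the key step of (2). You read Miyashita's condition $\rho^{m-1}(\alpha)h=h\alpha$ as $h\in J_{\rho^{m-1}}$. But since $\rho^m=1$, substituting $\rho(\alpha)$ for $\alpha$ turns it into $\alpha h=h\rho(\alpha)$, so $h\in J_\rho$ already; for $m\ge 3$ these are different spaces. Having misplaced $h$, you then try to shift it by $x$, and the shifted identity $\tau(hx)=\sum_j y_jhx^j$ (or its version with $xh$) is false.

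Multiplication by $x$ on either side sends $J_{\rho^k}$ into $J_{\rho^{k+1}}$. So $h\mapsto hx$ maps $J_{\rho^{m-1}}$ into $J_{\rho^m}=V$, not onto $J_\rho$. Moreover, the $a_k$ lie in $B^\rho$ and therefore commute with $x$, so the defining formula of $\tau$ gives $\tau(hx)=\tau(h)x$ and $\tau(xh)=x\tau(h)$: the identity is off by a factor of $x$. A concrete counterexample: take $B=\mathbb{C}$, $\rho$ complex conjugation, and $f=X^2-1$. Then $J_\rho=x\mathbb{C}$, and Miyashita's element is $h=x/2$, with $y_0h+y_1hx=xh+hx=1$. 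Yet $hx=xh=1/2\notin J_\rho$, and the formula gives $\tau(1/2)=x(1/2+1/2)=x\neq 1$.

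What you need is the unshifted identity $\tau(h)=\sum_{j=0}^{m-1}y_jhx^j$ for the same $h\in J_\rho$. To prove it, write $hx^j=x^j\tilde{\rho}^j(h)$ and $y_jx^j=\sum_{k=j}^{m-1}x^ka_{k+1}$. Note that $a_{k+1}\tilde{\rho}^j(h)=\tilde{\rho}^j(h)a_{k+1}$, because $\tilde{\rho}^j(h)\in J_\rho$ and $a_{k+1}\in B^\rho$, and then interchange the sums. It follows that $f$ is separable iff $1\in{\rm Im}\,\tau$, iff ${\rm Im}\,\tau=V^{\tilde{\rho}}$ by $V^{\tilde{\rho}}$-linearity. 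This is the paper's proof, and it needs neither invertibility of $a_0$ nor a comparison with $(A\otimes_BA)^A$.

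There is also a smaller problem in your well-definedness step. Combining $x\tau(g)=\tau(g)x$ with $\tau(g)x=x\tilde{\rho}(\tau(g))$ only gives $x(\tilde{\rho}(\tau(g))-\tau(g))=0$, and $x$ can be a zero-divisor in $A$ (e.g.\ when $a_0=0$). Compute directly instead: $\tilde{\rho}(\tau(g))-\tau(g)=\sum_{k=0}^{m-1}x^k(\tilde{\rho}^{k+1}(g)-g)a_{k+1}$. Every term vanishes, since $a_jg=\tilde{\rho}^j(g)a_j$ for all $g\in A$ (from $\alpha a_j=a_j\rho^{m-j}(\alpha)$ and $a_j\in B^\rho$), while $a_jg=ga_j$ for $g\in J_\rho$. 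This is the paper's remark $\tilde{\rho}^j(h)a_j=ha_j$.
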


\begin{proof} It is easily seen that ${\rm Im} \, \tau \subset V^{\tilde{\rho}}$ 
because $\tilde{\rho}^j(h)a_j=ha_j$ $(0 \leq j \leq m-1)$.\par
(1) It is obvious by Theorem \ref{thmr1}.\par 
(2) If $f$ is separable then $f$ is always weakly separable, and therefore it suffices to show that ${\rm Im} \, \tau =V^{\tilde{\rho}}$. 
By Proposition \ref{M}, $f$ is separable in $B[X;\rho]$ if and only if 
 $h \in A$ such that $\rho^{m-1}(\alpha)h=h\alpha$  for any $\alpha \in B$ and 
$\sum_{j=0}^{m-1}y_j h x^j=1$, where 
$y_j=x^{m-j-1}+x^{m-j-2}a_{m-1}+ \cdots +xa_{j+2}+a_{j+1}$  $(0 \leq j \leq m-2)$  and $y_{m-1}=1$. 
It is obvious that $h \in J_\rho$. 
Noting that $y_jx^j=\sum_{k=j}^{m-1}x^ka_{k+1}$, we obtain 
\begin{align*}
1=\sum_{j=0}^{m-1}y_jx^j \tilde{\rho}^j(h)
=\sum_{j=0}^{m-1} \{ \sum_{k=j}^{m-1}x^ka_{k+1} \}\tilde{\rho}^j(h)
= \sum_{k=0}^{m-1}x^k \sum_{j=0}^{k}\tilde{\rho}^j(h) a_{k+1}
= \tau(h). 
\end{align*}
This implies ${\rm Im} \, \tau =V^{\tilde{\rho}}$ because $\tau$ is a $V^{\tilde{\rho}}$-homomorphism. 
This completes the proof.  
\end{proof}

\smallskip

\begin{rmk}
In this section, we assumed that $f$  is in $B[X;\rho]_{(0)} \cap B^\rho[X]$. 
However, in general case,  a polynomial which is in $B[X;\rho]_{(0)}$ is not always in $B^\rho[X]$. 
Concerning this, we have already known by \cite[Corollary 1.5]{I1}
that if $B$ is a semiprime ring, then every polynomial in $B[X;\rho]_{(0)}$ is in  $C(B^\rho)[X]$, 
where $C(B^\rho)$ is the center  of  $B^{\rho}$. 
\end{rmk}

\smallskip

At the end of this section, we shall mention briefly on  
weakly quasi-separable polynomials in $B[X;\rho]$.  
When $B$ is an integral domain,  N. Hamaguchi and A. Nakajima 
proved that every polynomial in $B[X;\rho]_{(0)}$ 
is weakly quasi-separable (cf. \cite[Theorem 4.1.1]{HN}). 
More precisely, they showed that  it is true for a commutative ring $B$ 
when $\rho \neq 1$ and $\{\rho(c)-c \, | \, c \in B \}$ contains a non-zero-divisor in $B$. 
For an arbitrary ring $B$, we have  the following. 

\begin{prop}
$(1)$ If $\rho\neq1$ and $\{ \rho(c)-c \, | \, c \in B\}$ contains a non-zero divisor, then every polynomial in $B[X;\rho]_{(0)}$ 
is weakly quasi-separable.\par
$(2)$ Let $f=X^m-u$ be in  $B[X;\rho]_{(0)}$. If $m$ and $u$ are non-zero-divisors in $B$, then 
$f$ is weakly quasi-separable in  $B[X;\rho]$.
\end{prop}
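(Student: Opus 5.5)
For both parts the plan is the same: take a central $B$-derivation $\delta$ of $A$ and show $\delta(x)=0$. Since $\delta$ vanishes on $B$ and $A$ is generated over $B$ by $x$, the Leibniz rule then gives $\delta=0$. Throughout I use that $A$ is a free right $B$-module with basis $1,x,\dots,x^{m-1}$. I also use that centrality of $\delta$ means $g:=\delta(x)$ commutes with every element of $A$, i.e. $g\in C(A)$.

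For (1), I would argue as follows. As in the first part of the proof of Lemma \ref{lemr1}, apply $\delta$ to $\alpha x = x\rho(\alpha)$ for $\alpha\in B$. This gives $\alpha g = g\rho(\alpha)$, i.e. $g\in J_\rho$. Since $g$ is central, we also have $\alpha g = g\alpha$. Hence $g(\rho(\alpha)-\alpha)=0$ for all $\alpha\in B$. Choose $c\in B$ with $d=\rho(c)-c$ a non-zero-divisor, and write $g=\sum_{j=0}^{m-1}x^j c_j$ with $c_j\in B$. Then
$$0=g d=\sum_{j=0}^{m-1}x^j c_j d,$$
and freeness of the basis gives $c_j d=0$ for every $j$. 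Since $d$ is a non-zero-divisor, $c_j=0$ for all $j$. So $g=0$ and $\delta=0$.

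For (2), I would use centrality of $g$ differently. Because $g$ commutes with $x$, the Leibniz rule gives $\delta(x^k)=\sum_{i=0}^{k-1}x^i g x^{k-1-i}=kx^{k-1}g$. Since $x^m=u$ in $A$ and $\delta(u)=0$, applying $\delta$ yields $m x^{m-1}g=0$. Multiplying on the left by $x$ gives $m u g=0$. Using centrality once more, $ug=gu$, so
$$0=\sum_{j=0}^{m-1}x^j c_j u m,$$
where $g=\sum_j x^j c_j$. By freeness, $c_j u m=0$ for each $j$. Because $u$ and $m$ are non-zero-divisors, right cancellation by $m$ and then by $u$ gives $c_j=0$. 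Hence $g=0$ and $\delta=0$.

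The only delicate point is bookkeeping sides, since $B$ is noncommutative. The coefficients must sit on the right of the basis elements $x^j$, so that the relation we obtain has the form $c_j\cdot(\text{non-zero-divisor})=0$. I would move $u$ and $d$ to the right using the centrality of $g$, rather than the twisted relation $\alpha x=x\rho(\alpha)$, which would bring in $\rho$. Beyond this, the argument is a direct computation, and no earlier structure theory (such as $\tau$) is needed.
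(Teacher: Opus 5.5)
Your proof is correct and follows the paper's argument. In (1) you get $\delta(x)(\rho(\alpha)-\alpha)=0$ from $\alpha x=x\rho(\alpha)$ together with centrality, and in (2) you use $\delta(x^m)=mx^{m-1}\delta(x)=0$, both as the paper does. In (2) you differ only cosmetically: you multiply by $x$ and use centrality of $\delta(x)$ to move $u$ to the right, whereas the paper expands $x^{m-1}x^j=x^{j-1}u$ to read off $md_0=0$ and $mud_j=0$; you also spell out the freeness step that the paper leaves implicit in (1).
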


\begin{proof} (1) Let $g$ be in $B[X;\rho]_{(0)}$,  $\delta$ a central $B$-derivation of $B[X;\rho]/gB[X;\rho]$, 
and $x=X+gB[X;\rho] \in B[X;\rho]/gB[X;\rho]$. 
Then, for any $\alpha \in B$, we see that $\delta(x)(\rho(\alpha) -\alpha)=0$. 
Hence if  $\{ \rho(c)-c \, | \, c \in B\}$ contains a non-zero divisor, then $\delta=0$.

(2) Let $\delta$ be a central $B$-derivation of $A$ such that $\delta(x) = \sum_{j=0}^{m-1}x^jd_j$. 
Then an easy induction shows that $\delta(x^k)=kx^{k-1}\delta(x)$ for $k \geq 1$.  
Noting that $x^m=u$, we obtain  
$$
0=\delta(x^m)=mx^{m-1}\sum_{j=0}^{m-1}x^j d_j = mx^{m-1} d_0 + m \sum_{j=0}^{m-2}x^j u d_{j+1}.
$$
Thus $md_0=0$ and $mud_j=0$ $(1 \leq j \leq m-1)$, 
and it is obvious that $\delta=0$ if $m$ and $u$ are  non-zero-divisors in $B$. 
This completes the proof. 
\end{proof}

\smallskip

\subsection{Derivation type} 
In this section, let $B$ be of prime characteristic $p$, 
and we consider  a $p$-polynomial $f$ in $B[X;D]_{(0)}$  of the form 
\begin{align*}
f &=X^{p^e}+X^{p^{e-1}}b_e+ \cdots +X^pb_{2} + Xb_1 + b_0 = \sum_{j=0}^{e}X^{p^j}b_{j+1} +b_0 \  (b_{e+1}=1).
\end{align*}
We set $A=B[X;D]/fB[X;D]$, and $x=X+fB[X;D] \in A$. 
By \cite[Corollary 1.7]{I1}, 
$f$ is in  $B[X;D]_{(0)}$ if and only if 
\begin{align*}
\left\{\begin{array}{l}
b_0 \in B^D, \ \ b_{j+1} \in Z^D \  ( 0 \leq j \leq e-1),\\
\sum_{j=0}^{e}D^{p^j}(\alpha)b_{j+1} = b_0 \alpha - \alpha b_0 \ \ (\alpha \in B).
\end{array}\right.
\end{align*}
Since $f$ is in $B^D[X]$, there  is  a derivation $\tilde{D}$ of $A$ which is naturally induced by $D$, 
that is, $\tilde{D}$ is defined by $\tilde{D}(\sum_{j=0}^{p^e-1}x^j c_j)=\sum_{j=0}^{p^e-1}x^jD(c_j)$.  
We write $V=V_A(B)$,  $\tilde{D}(V)=\{ \tilde{D}(h)  \, | \, h \in V\}$, 
and $V^{\tilde{D}}= \{ v \in V \, | \, \tilde{D}(v)=0 \}$. 
Then we consider a $V^{\tilde{D}}$-$V^{\tilde{D}}$-homomorphism $\tau : V \longrightarrow V^{\tilde{D}}$ defined by   
\begin{align*}
\tau(h) &= \tilde{D}^{p^e-1}(h) +\tilde{D}^{p^{e-1}-1}(h)b_{e} + \cdots + \tilde{D}^{p-1}(h)b_2+h b_1\\
&= \sum_{j=0}^{e} \tilde{D}^{p^j-1}(h)b_{j+1}. %\ \ (b_{e+1} =1).
\end{align*}

First we shall show the following two lemmas concerning the $B$-derivation of $A$. 

\begin{lem}\label{lemd1}
If $\delta$ is a derivation of $A$, then  
$$
\delta(x^k)= \sum_{j=0}^{k-1}\binom{k}{j} x^j \tilde{D}^{k-1-j}(\delta(x)) \ \ for \ k \geq 2.
$$
\end{lem}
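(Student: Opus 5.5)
The plan is to prove the formula by induction on $k$, using one commutation rule in $A$ and Pascal's identity. Write $g=\delta(x)$. The formula also holds for $k=1$: the sum has the single term $\binom{1}{0}x^0\tilde{D}^0(g)=g$. So I will start the induction at $k=1$, which covers $k=2$ automatically.

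\emph{Preliminary facts.} First, I will record the commutation rule
\[
hx = xh + \tilde{D}(h)\quad\text{for all } h\in A .
\]
To check it, write $h=\sum_j x^jc_j$ with $c_j\in B$. Using $c_jx=xc_j+D(c_j)$ and the fact that $x$ commutes with $x^j$, we get $x^jc_jx=x^{j+1}c_j+x^jD(c_j)$. Summing over $j$ gives the rule; the reduction modulo $fB[X;D]$ is harmless because $f\in B^D[X]$, which is exactly what makes $\tilde{D}$ well defined.

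Second, I will note that $\tilde{D}(x^jh)=x^j\tilde{D}(h)$. This holds because $\tilde{D}$ is a derivation with $\tilde{D}(x)=0$, or directly from its definition. Hence every power $\tilde{D}^i$ commutes with left multiplication by powers of $x$.

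\emph{Inductive step.} Assume the formula for $k$ and apply the Leibniz rule in the form
\[
\delta(x^{k+1})=\delta(x^k)x+x^k\delta(x).
\]
In each term $x^j\tilde{D}^{k-1-j}(g)\,x$, apply the commutation rule with $h=\tilde{D}^{k-1-j}(g)$. This gives
\[
x^j\tilde{D}^{k-1-j}(g)\,x = x^{j+1}\tilde{D}^{k-1-j}(g)+x^j\tilde{D}^{k-j}(g).
\]
Now collect the coefficient of $x^j\tilde{D}^{k-j}(g)$ for $0\le j\le k$. It is $\binom{k}{j-1}+\binom{k}{j}=\binom{k+1}{j}$. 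At the ends, the $j=0$ term comes only from the second piece, and the $j=k$ term gets $\binom{k}{k-1}$ plus the extra $x^k g$, which totals $k+1=\binom{k+1}{k}$. So Pascal's rule yields exactly the claimed formula for $k+1$.

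The only step needing real care is the commutation rule $hx=xh+\tilde{D}(h)$ for general $h\in A$, not merely $h\in B$. In particular, it must be compatible with reduction modulo $f$, and this is where the hypothesis $f\in B^D[X]$ (coefficients $b_{j+1}\in Z^D$, $b_0\in B^D$) is used. The rest is bookkeeping with binomial coefficients. Note that the argument does not use that $\delta$ vanishes on $B$; it only uses that $\delta$ is a derivation.
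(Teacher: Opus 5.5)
Your proposal is correct and follows essentially the same route as the paper's proof. Both argue by induction using $\delta(x^{k+1})=\delta(x^k)x+x^k\delta(x)$, then apply the commutation rule $hx=xh+\tilde{D}(h)$ to $h=\tilde{D}^{k-1-j}(\delta(x))$, then use Pascal's identity. The differences are only cosmetic: you start the induction at $k=1$ instead of checking $k=2$ directly, you justify the commutation rule that the paper uses without comment, and your second preliminary fact $\tilde{D}(x^jh)=x^j\tilde{D}(h)$ is never actually used.
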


\begin{proof}
 We shall show it by an induction. 
Noting that $\delta(x)x=x\delta(x) + \tilde{D}(\delta(x))$,  it is true when $k=2$. 
Assume that $\delta(x^k)= \sum_{j=0}^{k-1}\binom{k}{j} x^j \tilde{D}^{k-1-j}(\delta(x))$. 
Then we obtain 
\begin{align*}
\delta(x^{k+1}) &= \delta(x^k)x + x^k\delta(x)\\
&= \sum_{j=0}^{k-1}\binom{k}{j} x^j \tilde{D}^{k-1-j}(\delta(x))x +x^k\delta(x)\\
&= \sum_{j=0}^{k-1}\binom{k}{j} x^j \Big\{x\tilde{D}^{k-1-j}(\delta(x)) +\tilde{D}^{k-j}(\delta(x)) \Big\}+x^k\delta(x)\\
&= \sum_{j=0}^{k-1}\binom{k}{j} x^{j+1}\tilde{D}^{k-1-j}(\delta(x)) 
+\sum_{j=0}^{k-1}\binom{k}{j} x^j\tilde{D}^{k-j}(\delta(x))  +x^k\delta(x)\\
&= kx^k\delta(x) + \sum_{j=1}^{k-1}\binom{k}{j-1} x^{j}\tilde{D}^{k-j}(\delta(x))\\
& \ \ \ +  \sum_{j=1}^{k-1}\binom{k}{j} x^j\tilde{D}^{k-j}(\delta(x)) + \tilde{D}^k(\delta(x)) +x^k\delta(x)\\ 
&= (k+1) x^k\delta(x) + \sum_{j=1}^{k-1} \Big\{ \binom{k}{j-1} + \binom{k}{j} \Big\} x^{j}\tilde{D}^{k-j}(\delta(x)) + \tilde{D}^k(\delta(x))\\
&= (k+1) x^k\delta(x) + \sum_{j=1}^{k-1} \binom{k+1}{j} x^{j}\tilde{D}^{k-j}(\delta(x)) + \tilde{D}^k(\delta(x))\\
&= \sum_{j=0}^{k} \binom{k+1}{j} x^{j}\tilde{D}^{k-j}(\delta(x)).
\end{align*} 
This completes the proof. 
\end{proof}

\smallskip

\begin{lem}\label{lemd2}
If $\delta$ is a $B$-derivation of $A$, then $\delta(x) \in V$
and $\tau(\delta(x))=0$. 
Conversely, if $g \in V$ with $\tau(g)=0$, then there exists a $B$-derivation 
$\delta$ of $A$ such that $\delta(x)=g$. 
\end{lem}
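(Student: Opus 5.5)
The argument follows the automorphism case, Lemma \ref{lemr1}, with Lemma \ref{lemd1} in place of the formula $\delta(x^k)=x^{k-1}\sum\tilde{\rho}^i(\delta(x))$.

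\textbf{First direction.} Let $\delta$ be a $B$-derivation of $A$. For $\alpha\in B$ we have $\alpha x = x\alpha + D(\alpha)$. Applying $\delta$ and using $\delta(\alpha)=\delta(D(\alpha))=0$ gives
$$\alpha\delta(x)=\delta(x)\alpha,$$
so $\delta(x)\in V$.

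Next we compute $\delta(f(x))=0$ with Lemma \ref{lemd1} for $k=p^j$. In characteristic $p$, $\binom{p^j}{i}\equiv 0$ for $0<i<p^j$, so only the $i=0$ term survives:
$$\delta(x^{p^j})=\tilde{D}^{p^j-1}(\delta(x)).$$
This also holds for $j=0$. The coefficients $b_{j+1}$ lie in $B$, so $\delta(x^{p^j}b_{j+1})=\delta(x^{p^j})b_{j+1}$, and $\delta(b_0)=0$. Hence
$$0=\delta(f(x))=\sum_{j=0}^{e}\tilde{D}^{p^j-1}(\delta(x))b_{j+1}=\tau(\delta(x)).$$

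Lemma \ref{lemd1} is stated for an arbitrary derivation, but its proof only uses $\delta(x)x=x\delta(x)+\tilde{D}(\delta(x))$. This is valid here because $\delta(x)\in V$ and every $h\in V$ satisfies $hx=xh+\tilde{D}(h)$. To confirm that identity, write $h=\sum x^jc_j$; then $hx=\sum x^j c_j x=\sum x^j(xc_j+D(c_j))=xh+\tilde{D}(h)$.

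\textbf{Converse.} Let $g\in V$ with $\tau(g)=0$, and lift it to $g_0\in B[X;D]$ of degree $<p^e$. Define $\delta^*$ on $B[X;D]$ by
$$\delta^*\Bigl(\sum X^jc_j\Bigr)=\sum \delta^*(X^j)c_j,$$
where the values $\delta^*(X^j)$ are determined by Leibniz from $\delta^*(X)=g_0$.

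Making $\delta^*$ a well-defined $B$-derivation into $B[X;D]$ would require $\alpha g_0=g_0\alpha$ in $B[X;D]$ itself, which is too strong. We therefore work modulo $f$ and construct $\delta$ directly on $A$, as the standard universal-property argument does. The relation $\alpha X=X\alpha+D(\alpha)$ is respected exactly when $\alpha g=g\alpha$ in $A$, and this holds because $g\in V$. Concretely, a derivation of $B[X;D]$ into the $B[X;D]$-bimodule $A$ is determined by the value $g$ on $X$ and zero on $B$, provided the defining relations are killed. The relation $\alpha X-X\alpha-D(\alpha)$ maps to $\alpha g-g\alpha=0$. This gives a $B$-derivation $\delta^*:B[X;D]\to A$.

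It then descends to $A=B[X;D]/fB[X;D]$ once $\delta^*(fB[X;D])=0$. Since $\delta^*(fq)=\delta^*(f)q+f\delta^*(q)$ and $f$ acts as zero on $A$, this reduces to $\delta^*(f)=0$. By the computation of the first direction, $\delta^*(f)=\tau(g)=0$.

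\textbf{Main obstacle.} The delicate point is this converse construction: justifying that a $B$-derivation of the skew polynomial ring into the bimodule $A$ exists with prescribed value $g\in V$ on $X$. I would handle it by defining $\delta^*(\sum X^jc_j)=\sum_j\bigl(\sum_{i<j}\binom{j}{i}x^i\tilde{D}^{j-1-i}(g)\bigr)c_j$, using the formula of Lemma \ref{lemd1}, and verifying the Leibniz rule on products $X^k\cdot\alpha$ and $X^k\cdot X^l$. This verification is routine but tedious binomial bookkeeping, parallel to the proof of Lemma \ref{lemd1}.
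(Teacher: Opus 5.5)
Your proof is correct. The first half (commuting $\delta(x)$ with $B$, then using Lemma \ref{lemd1} and $\binom{p^j}{i}\equiv 0$ for $0<i<p^j$ to get $\delta(f(x))=\tau(\delta(x))$) is exactly the paper's argument.

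For the converse, the paper imitates Lemma \ref{lemr1}. It lifts $g$ to $g_0\in B[X;D]$, takes the $B$-derivation $\delta^*$ of $B[X;D]$ into itself with $\delta^*(X)=g_0$, checks $\delta^*(f)\in fB[X;D]$ from $\tau(g)=0$, and passes to the quotient. You instead build $\delta^*$ as a derivation from $B[X;D]$ into the bimodule $A$, which needs only $\alpha g=g\alpha$ in $A$. This is a valid and slightly more economical variant; your map is just the paper's $\delta^*$ followed by the projection.

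Your universal-property justification already completes the construction. Equivalently, take the ring map $B[X;D]\to B[X;D]\ltimes A$, $\beta\mapsto(\beta,0)$, $X\mapsto(X,g)$, which respects $\alpha X=X\alpha+D(\alpha)$ exactly because $g\in V$. So the binomial bookkeeping in your last paragraph is unnecessary. If you did carry it out with $\delta^*$ defined right $B$-linearly, note that the case which actually uses $g\in V$ is $\alpha\cdot X^k$. The case $X^k\cdot\alpha$ holds by definition.

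Your stated reason for leaving the paper's route is mistaken: $\alpha g_0=g_0\alpha$ in $B[X;D]$ is automatic, not too strong. Take $g_0$ of degree $<p^e$.
\begin{itemize}
\item Both $\alpha g_0$ and $g_0\alpha$ then have degree $<p^e$, since $\alpha X^j=\sum_{i\le j}\binom{j}{i}X^iD^{j-i}(\alpha)$.
\item Their difference lies in $fB[X;D]$, whose nonzero elements have degree $\ge p^e$ because $f$ is monic.
\end{itemize}
Hence $\alpha g_0-g_0\alpha=0$.

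Finally, the identity $hx=xh+\tilde{D}(h)$ that you verify holds for every $h\in A$; your computation never uses $h\in V$. So you need no membership in $V$ to run Lemma \ref{lemd1}.
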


\begin{proof}
Let $\delta$ be a $B$-derivation of $A$. 
It can be easily   seen that $\alpha \delta(x) = \delta(x) \alpha$ for any $\alpha \in B$. 
Moreover, by Lemma \ref{lemd1}, 
we have 
\begin{align*}
0= \delta(\sum_{j=0}^{e}x^{p^j}b_{j+1} +b_0)=\sum_{j=0}^{e}\delta(x^{p^j})b_{j+1} = \sum_{j=0}^{e}\tilde{D}^{p^j-1}(\delta(x))b_{j+1}=\tau(\delta(x)).
\end{align*}

The converse can be proved  by  a similar way of the proof of Lemma \ref{lemr1}. 
This completes the proof. 
\end{proof}

\smallskip

Now we shall give a sharpening of \cite[Theorem 4.2.3]{HN} 

\begin{thm}\label{thmd1}
Let $f=X^{p^e}+X^{p^{e-1}}b_e+ \cdots +X^pb_{2} + Xb_1 + b_0$ be in $B[X;D]_{(0)}$. 
Then $f$ is  weakly separable  in $B[X;D]$ if and only if 
$$
\{ g \in V \, | \, \tau(g)=0 \} = \tilde{D}(V).
$$
\end{thm}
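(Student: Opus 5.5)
The argument runs parallel to Theorem \ref{thmr1}, with Lemma \ref{lemd2} playing the role of Lemma \ref{lemr1}. The key identity is that for $h\in V$ the inner derivation by $h$ acts on $x$ as
$$hx - xh = \tilde{D}(h).$$
Indeed, writing $h=\sum_j x^j c_j$ with $h\in V$, the relation $\alpha x = x\alpha + D(\alpha)$ gives $c_j x = x c_j + D(c_j)$. Since $x$ commutes with powers of $x$, we get $hx = xh + \tilde{D}(h)$, i.e.\ $hx - xh = \tilde{D}(h)$. Along the way one checks that $\tilde{D}(V)\subset V$: for $h\in V$ and $\alpha\in B$, apply $\tilde{D}$ to $\alpha h = h\alpha$ and use $\alpha h=h\alpha$ to cancel.

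For sufficiency, assume $\{g\in V \mid \tau(g)=0\} = \tilde{D}(V)$ and let $\delta$ be a $B$-derivation of $A$. By Lemma \ref{lemd2}, $\delta(x)\in V$ and $\tau(\delta(x))=0$, so $\delta(x)=\tilde{D}(h)=hx-xh$ for some $h\in V$. The map $w\mapsto \delta(w)-(hw-wh)$ is then a derivation that vanishes on $B$ (because $h\in V$) and on $x$. Since $A$ is generated by $B$ and $x$, it vanishes on all of $A$, so $\delta$ is inner.

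For necessity, assume $f$ is weakly separable and take $g\in V$ with $\tau(g)=0$. Lemma \ref{lemd2} gives a $B$-derivation $\delta$ with $\delta(x)=g$, and $\delta$ is inner, say $\delta(w)=hw-wh$. Because $\delta$ vanishes on $B$, $h\in V$, and hence $g=hx-xh=\tilde{D}(h)\in\tilde{D}(V)$.

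It remains to prove the inclusion $\tilde{D}(V)\subset\{g\in V \mid \tau(g)=0\}$; I expect this to be the main obstacle, since it is the analogue of the computation at the end of Theorem \ref{thmr1}. The cleanest route avoids computing $\tau$ directly. For $h\in V$, the inner derivation $\delta_h(w)=hw-wh$ is a $B$-derivation of $A$ (again because $h\in V$), and $\delta_h(x)=\tilde{D}(h)$. The forward direction of Lemma \ref{lemd2} then gives $\tau(\tilde{D}(h))=\tau(\delta_h(x))=0$ immediately. If one prefers a direct check instead, one would use $hx^{k}=\sum_i\binom{k}{i}x^{i}\tilde{D}^{k-i}(h)$ together with the fact that $\binom{p^j}{i}\equiv 0$ for $0<i<p^j$ in characteristic $p$, and compare with $hf=fh$ in $A$. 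Either way, the two inclusions give the asserted equality.
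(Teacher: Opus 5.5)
Your proof is correct and follows the paper's argument. Both directions go through Lemma~\ref{lemd2} and the identity $hx-xh=\tilde{D}(h)$, with weak separability turning the $B$-derivation $\delta$ with $\delta(x)=g$ into $g=\tilde{D}(h)$ for some $h\in V$; the only difference is the inclusion $\tilde{D}(V)\subset\{g\in V \mid \tau(g)=0\}$, where the paper writes $\tau(\tilde{D}(v))=\tilde{D}(\tau(v))=0$ (tacitly using that $\tau$ lands in $V^{\tilde{D}}$, which is asserted but never checked), whereas you apply the forward half of Lemma~\ref{lemd2} to the inner $B$-derivation $w\mapsto hw-wh$, which is slightly more self-contained.
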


\begin{proof} 
Assume that $f$ is weakly separable.  
We see that  $\{ g \in V \, | \, \tau(g)=0 \} \supset \tilde{D}(V)$
because $\tau(\tilde{D}(v))=\tilde{D}(\tau(v))=0$ for any $v \in V$. 
Let $g$ be an element in $V$ such that $\tau(g)=0$. 
By Lemma \ref{lemd2}, we can define a $B$-derivation of $A$ by $\delta(x)=g$. 
Since $f$ is weakly separable, 
$g=\delta(x)=hx-xh=\tilde{D}(h)$ for some $h \in V$. 
Thus $g \in \tilde{D}(V)$.  

Conversely, assume that $\{ g \in V \, | \, \tau(g)=0 \} = \tilde{D}(V)$, 
and let $\delta$ be a $B$-derivation of $A$. 
It follows from Lemma \ref{lemd2} that $\delta(x) \in V$ and $\tau(\delta(x))=0$, and hence  
$\delta(x)=\tilde{D}(h) = hx -xh$ for some $h \in V$. 
Then it is easy to see that $\delta(w)= hw-wh$ for any $w \in A$. 
Therefore $\delta$ is inner. 
This completes the proof. 
\end{proof}

%\bigskip

\smallskip

As a direct consequence of Theorem \ref{thmd1}, we have the follwoing. 

\begin{cor}\cite[Theorem 4.2.3]{HN}
Let $B$ be an integral domain of prime characteristic $p$, and $f=X^p+Xb_1+b_0$ in $B[X;\rho]_{(0)}$.
Then $f$ is weakly separable in $B[X;D]$ if and only if 
$$
\{ c \in B\ \, | \, D^{p-1}(c)+cb_1=0 \} = D(B). 
$$
\end{cor}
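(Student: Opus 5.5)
This corollary should follow directly from Theorem \ref{thmd1}, specialized to $e=1$, $f=X^p+Xb_1+b_0$ (the ring in the statement should read $B[X;D]_{(0)}$). The plan is to compute $V$ and $\tau$ explicitly when $B$ is an integral domain, and then translate the condition $\{g\in V \mid \tau(g)=0\}=\tilde D(V)$ into a condition on elements of $B$.

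First, I will determine $V=V_A(B)$. The element $h=\sum_{j=0}^{p-1}x^jc_j$ lies in $V$ exactly when $\alpha h=h\alpha$ for all $\alpha\in B$. Using $\alpha x^j=\sum_i \binom{j}{i}x^{i}D^{j-i}(\alpha)$ and the commutativity of $B$, I will compare top-degree coefficients. If $c_k\neq 0$ is the top coefficient with $k\ge1$, the coefficient of $x^{k-1}$ in $\alpha h-h\alpha$ is $k D(\alpha)c_k$. Since $1\le k\le p-1$, $k$ is a unit in characteristic $p$, and $B$ is a domain, this forces $D(\alpha)=0$ for all $\alpha$.

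So, assuming $D\neq0$ (otherwise $B[X;D]=B[X]$ is the commutative case, where the statement degenerates and can be checked directly), we get $V=B$. The potential subtlety is whether $x^p$-terms reduce via $f$; this does not arise, since $\deg h\le p-1$. This computation of $V$ is the main point requiring care: the degree bookkeeping and the use of the domain hypothesis.

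Next, with $V=B$, we have $\tilde D|_V=D$, so $\tilde D(V)=D(B)$. Moreover, $\tau(c)=\tilde D^{p-1}(c)+cb_1=D^{p-1}(c)+cb_1$ for $c\in B$. Hence
\[
\{g\in V \mid \tau(g)=0\}=\{c\in B \mid D^{p-1}(c)+cb_1=0\},
\]
and Theorem \ref{thmd1} gives exactly the stated equivalence.
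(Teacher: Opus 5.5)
Your proposal is correct and follows the paper's proof: the paper also gets $V=B$ by comparing coefficients of $\alpha h$ and $h\alpha$. It finds $(p-1)D(\alpha)c_{p-1}=0$, hence $c_{p-1}=0$, and iterates downward, which is your top-coefficient argument; it then reads off $\tilde D(V)=D(B)$ and $\tau(c)=D^{p-1}(c)+cb_1$ and applies Theorem~\ref{thmd1}. Your separate treatment of $D=0$ is a small improvement, since the paper's step tacitly needs some $D(\alpha)\neq 0$; when $D=0$ one has $V=A$, $\tilde D(V)=0$ and $\tau(g)=gb_1$, so both conditions reduce to $b_1\neq 0$ and the statement still holds.
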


\begin{proof} 
Let $h =\sum_{j=0}^{p-1}x^j c_j$ be in $V$. 
Since  $\alpha h = h \alpha$ for any $\alpha \in B$,  we have   
$$
c_i \alpha = \sum_{j=i}^{p-1}\binom{j}{i}D^{j-i}(\alpha)c_j \ \ (0 \leq i \leq p-1). 
$$
This implies $c_{p-2} \alpha = \alpha c_{p-2} + (p-1)D(\alpha) c_{p-1}$, and hence $c_{p-1}=0$. 
Repeating this, we obtain $h=c_0 \in B$, namely, $V=B$. 
Then we have the assertion by Theorem \ref{thmd1}. 
This completes the proof. 
\end{proof}

%\bigskip

\smallskip

In virtue of Theorem \ref{thmd1},  we obtain the following theorem 
concerning the relationship between 
the separability and the weakly separability    
in $B[X;D]$. 

\begin{thm}\label{thmd2}
Let $f=X^{p^e}+X^{p^{e-1}}b_e+ \cdots +X^pb_{2} + Xb_1 + b_0$ be in $B[X;D]_{(0)}$.\par 
$(1)$ $f$ is  weakly separable  in $B[X;D]$ if and only if 
 the following  sequence of $V^{\tilde{D}}$-$V^{\tilde{D}}$-homomorphisms is exact:  
$$
0 \longrightarrow
V^{\tilde{D}}  \overset{{\rm inj}}{\longrightarrow}  V 
\overset{\tilde{D}}{\longrightarrow} V \overset{\tau}{\longrightarrow} V^{\tilde{D}}.
$$

$(2)$ $f$ is separable in $B[X;D]$ if and only if 
 the following  sequence of $V^{\tilde{D}}$-$V^{\tilde{D}}$-homomorphisms is exact:  
$$
0 \longrightarrow
V^{\tilde{D}}  \overset{{\rm inj}}{\longrightarrow}  V 
\overset{\tilde{D}}{\longrightarrow} V \overset{\tau}{\longrightarrow} V^{\tilde{D}}
\longrightarrow 0.
$$
\end{thm}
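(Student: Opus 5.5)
The plan mirrors the proof of Theorem \ref{thmr2}, with Theorem \ref{thmd1} and Proposition \ref{M} doing the work. First the sequence has to make sense. For $h\in V$ we have $\tilde{D}(h)=hx-xh$, which lies in $V$ because $x$ commutes with $B$ up to $D$ and $h$ centralizes $B$; the kernel of $\tilde{D}$ on $V$ is $V^{\tilde{D}}$. We also need ${\rm Im}\,\tau\subset V^{\tilde{D}}$. Since $b_{j+1}\in Z^D$ and $\tilde{D}$ commutes with right multiplication by $b_{j+1}$, we get
\[
\tilde{D}(\tau(h))=\sum_{j=0}^{e}\tilde{D}^{p^j}(h)b_{j+1}.
\]
The relation $\sum_j D^{p^j}(\alpha)b_{j+1}=b_0\alpha-\alpha b_0$ should extend coefficientwise to $h\in V$, giving $\tilde{D}(\tau(h))=b_0h-hb_0=0$ because $h$ commutes with $b_0\in B$. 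All maps are $V^{\tilde{D}}$-bimodule maps, since $\tilde{D}$ is a derivation vanishing on $V^{\tilde{D}}$. Exactness at the first two spots, $0\to V^{\tilde{D}}\to V\to V$, is then automatic.

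For (1), exactness at the middle $V$ means exactly ${\rm Ker}\,\tau={\rm Im}\,\tilde{D}=\tilde{D}(V)$. By Theorem \ref{thmd1} this is equivalent to $f$ being weakly separable.

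For (2), the additional condition is that $\tau$ is surjective onto $V^{\tilde{D}}$. Since $\tau$ is a $V^{\tilde{D}}$-homomorphism, this reduces to showing that $1\in{\rm Im}\,\tau$ if and only if $f$ is separable. Here I apply Proposition \ref{M} with $\rho=1$ and $m=p^e$: $f$ is separable if and only if there is $h\in V$ with $\sum_{j=0}^{m-1}y_jhx^j=1$. The main step is the identity
\[
\sum_{j=0}^{m-1}y_jhx^j=\tau(h)\qquad(h\in V).
\]
To prove it, I rewrite $hx^j=\sum_i\binom{j}{i}x^{i}\tilde{D}^{j-i}(h)$, which is Lemma \ref{lemd1}-type bookkeeping from $hx=xh+\tilde{D}(h)$. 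I then collect terms using $y_j=\sum_{k\ge j}x^{k-j}a_{k+1}$, where $a_k$ denotes the coefficient of $X^k$ in $f$, so that $a_k=b_{j+1}$ if $k=p^j$ and $a_k=0$ otherwise, apart from $a_0=b_0$. The coefficient attached to $a_{k+1}$ becomes $\sum_{j\le k}\sum_i\binom{j}{i}x^{k-j+i}\tilde{D}^{j-i}(h)$. Grouping by $s=j-i$ turns this into a sum of the form $\sum_s x^{k-s}\tilde{D}^s(h)\sum_{j}\binom{j}{j-s}$, and the inner sum is $\binom{k+1}{s+1}$ by the hockey-stick identity. Only the indices $k+1=p^j$ contribute, and modulo $p$ the binomial $\binom{p^j}{s+1}$ vanishes unless $s+1=p^j$. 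What survives is $\sum_j\tilde{D}^{p^j-1}(h)b_{j+1}=\tau(h)$.

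This binomial computation is the main obstacle, and it is where characteristic $p$ and the $p$-polynomial shape of $f$ are used. The cleaner alternative I would try first is to note that $\sum_j y_j\otimes x^j$ generates $(A\otimes_BA)^A$ over $V$ (cf.\ \cite[Lemma 3.1]{YI1}) and to compute the multiplication map on it directly.

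With the identity in hand, $f$ separable implies that $f$ is weakly separable, so by (1) the sequence is exact at the middle $V$, and $1=\tau(h)$ gives surjectivity of $\tau$. Conversely, exactness of the full sequence gives weak separability together with some $h\in V$ such that $\tau(h)=1$, and Proposition \ref{M} then yields that $f$ is separable.
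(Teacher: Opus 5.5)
Your proof is correct. Part (1) is the paper's argument: Theorem \ref{thmd1} read as exactness at the middle $V$.

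For (2) your route differs. The paper computes nothing there. It quotes Ikehata's criterion \cite[Theorem 4.1]{I1}, that $f$ is separable if and only if $\tau(h)=1$ for some $h\in V$, and then uses the $V^{\tilde{D}}$-linearity of $\tau$. You instead derive that criterion from Miyashita's Proposition \ref{M} with $\rho=1$, $m=p^e$, via the identity $\sum_{j=0}^{m-1}y_jhx^j=\tau(h)$. You obtain it from three facts:
\begin{itemize}
\item $hx^j=\sum_i\binom{j}{i}x^i\tilde{D}^{j-i}(h)$;
\item the hockey-stick identity;
\item the vanishing of $\binom{p^j}{s+1}$ modulo $p$ for $s+1<p^j$.
\end{itemize}
This computation checks out.

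The paper's route is a one-line citation of an external theorem. Yours is self-contained relative to results stated in the paper and runs parallel to the paper's proof of Theorem \ref{thmr2}. It also shows exactly where characteristic $p$ and the $p$-polynomial shape of $f$ are used. You additionally verify that the sequence is well defined ($\tilde{D}(V)\subset V$ and ${\rm Im}\,\tau\subset V^{\tilde{D}}$), which the paper takes for granted in the derivation case.

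Two small steps should be stated explicitly:
\begin{itemize}
\item When collecting terms you move $a_{k+1}$ to the right of $hx^j$. This is legitimate because every $a_{k+1}$ with $k\ge 0$ lies in $Z^D$, so it commutes with $x$ and with $B$ and is therefore central in $A$.
\item The ``coefficientwise'' extension of $\sum_j D^{p^j}(\alpha)b_{j+1}=b_0\alpha-\alpha b_0$ to $h=\sum_i x^ic_i\in V$ uses that $b_0\in B^D$ commutes with $x$. Only then do you get $\sum_i x^i(b_0c_i-c_ib_0)=b_0h-hb_0=0$.
\end{itemize}
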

\medskip

\begin{proof} 
(1) It is obvious by Theorem \ref{thmd1}.\par
(2) If $f$ is separable then $f$ is always weakly separable, 
and therefore it suffices to show that ${\rm Im} \, \tau = V^{\tilde{D}}$. 
By \cite[Theorem 4.1]{I1}, 
$f$ is  separable  in $B[X;D]$ if and only if there exists $h \in V$ such that 
$\tau(h)=1$. 
This implies ${\rm Im} \, \tau = V^{\tilde{D}}$ because $\tau$ is a $V^{\tilde{D}}$-homomorphism.  
This completes the proof. 
\end{proof}

\smallskip

Finally, we shall mention briefly on  weakly quasi-separable polynomials in $B[X;D]$.  
As same as automorphism type,  
N. Hamaguchi and A. Nakajima proved that every polynomial in $B[X;D]_{(0)}$ 
is weakly quasi-parable when $B$ is an integral domain (cf. \cite[Theorem 4.2.1]{HN}). 
More precisely, they showed that it is true for a commutative ring $B$ 
when $D(B)$ contains a non-zero-divisor. 
For an arbitrary ring $B$, we have the following. 
\begin{prop}
$(1)$ If $D(B)$ contains a non-zero-divisor, then every polynomial in $B[X;D]_{(0)}$ is weakly quasi-seprable.\par
$(2)$ Let $f=X^{p^e}+X^{p^{e-1}}b_e+ \cdots +X^pb_{2} + Xb_1 + b_0$ be in $B[X;D]_{(0)}$. 
If $b_1$ is a non-zero-divisor in $B$, then $f$ is wakly quasi-separable. 
\end{prop}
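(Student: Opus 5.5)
Both parts rest on the same kind of computation as in the automorphism-type Proposition. Fix $g \in B[X;D]_{(0)}$, put $A' = B[X;D]/gB[X;D]$ and $x = X + gB[X;D]$, and let $\delta$ be a central $B$-derivation of $A'$. Since $A'$ is generated over $B$ by $x$ and $\delta$ vanishes on $B$, it suffices to show $\delta(x) = 0$. For this I will apply $\delta$ to the defining relation $\alpha x = x\alpha + D(\alpha)$ and then use centrality.

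For (1): applying $\delta$ to $\alpha x = x\alpha + D(\alpha)$ with $\alpha \in B$ gives
\begin{align*}
\alpha\delta(x) = \delta(x)\alpha .
\end{align*}
Since $\delta$ is central, $\delta(x)$ commutes with every element of $A'$, so this identity alone gives nothing. I need to extract a multiplication by $D(\alpha)$, so I would instead use the centrality relation $\delta(\beta)\gamma = \gamma\delta(\beta)$ with $\beta = \alpha x$ and $\gamma = x$, then expand. The cleaner route is to compare $\delta(x\alpha)$ and $\delta(\alpha x)$:
\begin{align*}
\delta(\alpha x) = \alpha\delta(x), \qquad \delta(x\alpha) = \delta(x)\alpha .
\end{align*}
Their difference is $\delta(D(\alpha)) = 0$, which again carries no information.

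So the useful step must be the centrality identity applied to $\delta(x)$ and $x$ themselves, i.e. working inside $\delta(x^2)$. Using $\delta(x)x = x\delta(x) + \tilde{D}(\delta(x))$, which holds inside $A'$ for any $w \in V_{A'}(B)$ (here note $w x = x w + \tilde D(w)$ for $w\in V$), centrality forces $\tilde{D}(\delta(x)) = 0$. Now write $\delta(x) = \sum x^j d_j$. This element is central, so $\alpha\delta(x) = \delta(x)\alpha$. I would compare it with the commutation rule for $x^j$ and $\alpha$, namely $\alpha x^j = \sum_i \binom{j}{i} x^i D^{j-i}(\alpha)$, and look at the top coefficient. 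If $d_k$ is the leading nonzero coefficient, the coefficient of $x^{k-1}$ gives a relation of the form $k D(\alpha) d_k + (\text{commutator terms}) = 0$.

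This is the step I expect to be the main obstacle: with $B$ noncommutative, the commutator terms such as $\alpha d_{k-1} - d_{k-1}\alpha$ do not vanish automatically. My first attempt is to exploit the full centrality of $\delta(x)$, which gives $\delta(x) \in C(A')$. I would then pick $\alpha$ with $D(\alpha)$ a non-zero-divisor and argue that the $x^0$-component of $\tilde D(\delta(x))=0$ together with centrality yields $\delta(x)D(\alpha)$-type annihilation, hence $\delta(x)=0$ by cancelling the non-zero-divisor, in direct analogy with the automorphism case.

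For (2) I would follow the proof of the automorphism-type Proposition (2). By centrality and Lemma \ref{lemd1}, every term $x^j\tilde{D}^{k-1-j}(\delta(x))$ with $j<k-1$ either vanishes or carries a binomial coefficient $\binom{p^j}{i}\equiv 0 \pmod p$. Indeed, for a central derivation $\delta(x)$ commutes with $x$, so $\tilde D(\delta(x))=0$ and Lemma \ref{lemd1} reduces to $\delta(x^k) = kx^{k-1}\delta(x)$. Then $\delta(x^{p^j}) = 0$ for $j \ge 1$, since $p^j = 0$ in $B$. Hence applying $\delta$ to $f(x) = 0$ gives
\begin{align*}
0 = \sum_{j=0}^{e}\delta(x^{p^j})b_{j+1} = \delta(x)b_1 .
\end{align*}
Writing $\delta(x) = \sum x^i d_i$ in the free basis and using that $b_1 \in Z^D$ commutes past powers of $x$, this gives $d_i b_1 = 0$ for all $i$. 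Since $b_1$ is a non-zero-divisor, each $d_i = 0$, so $\delta = 0$.
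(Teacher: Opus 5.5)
Your part (2) is correct and is essentially the paper's argument. Centrality gives $\tilde D(\delta(x))=0$, so $\delta(x)\in V^{\tilde D}$ and $0=\tau(\delta(x))=\delta(x)b_1$, and freeness of $A$ over $B$ gives $\delta(x)=0$. You do not need $b_1\in Z^D$ for the last step, since $\delta(x)b_1=\sum_i x^i d_ib_1$ directly.

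Part (1), however, has a genuine gap. You never derive $\delta(x)D(\alpha)=0$, and the route you finally settle on cannot produce it. That route uses only $\delta(x)\in C(A')$, $\tilde D(\delta(x))=0$, and coefficient comparisons from $\alpha\delta(x)=\delta(x)\alpha$; the last of these only restates $\delta(x)\in V$, which holds for every $B$-derivation. All of these are properties of the single element $\delta(x)$, and every central element of $A'$ has them. For example $1$ has them, yet $1\cdot D(\alpha)=D(\alpha)\neq 0$ in general. So no argument built only on these facts can force $\delta(x)D(\alpha)=0$. The ``direct analogy with the automorphism case'' also fails here. There, the first-order relation $\alpha\delta(x)=\delta(x)\rho(\alpha)$ already clashes with centrality. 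Here, the corresponding relation $\alpha\delta(x)=\delta(x)\alpha$ is automatically consistent with it, as you yourself observed.

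The missing idea is to use centrality of $\delta$ at an element other than $x$. You actually wrote down the right move and then abandoned it: take $\beta=\alpha x$, $\gamma=x$. Then $\delta(\alpha x)=\alpha\delta(x)$ must commute with $x$. Using $\delta(x)x=x\delta(x)$,
\[
\alpha\delta(x)\,x=\alpha x\,\delta(x)=(x\alpha+D(\alpha))\delta(x)=x\,\alpha\delta(x)+D(\alpha)\delta(x).
\]
Hence $D(\alpha)\delta(x)=0$, and since $\delta(x)$ is central, $\delta(x)D(\alpha)=0$. The paper does the same computation with $\delta(x\alpha)=\delta(x)\alpha$ in place of $\delta(\alpha x)$. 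Now write $\delta(x)=\sum_j x^jd_j$ in the free right $B$-basis. This gives $d_jD(\alpha_0)=0$ for all $j$, so $\delta(x)=0$ whenever $D(\alpha_0)$ is a non-zero-divisor.

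The comparison of $\delta(x\alpha)$ with $\delta(\alpha x)$ that you called ``cleaner'' is a first-order computation, and that is exactly why it carries no information. What is needed is the second-order identity obtained by commuting $\delta(\alpha x)$ with $x$.
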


\begin{proof} 
(1) Let $g$ be in $B[X;D]_{(0)}$,  $\delta$ a central $B$-derivation of $B[X;\rho]/gB[X;D]$, 
and $x=X+gB[X;D] \in B[X;\rho]/gB[X;D]$. 
Then, for any $\alpha \in B$, we see that 
$$\delta(x\alpha)x=\delta(x)(x\alpha + D(\alpha))=x\delta(x\alpha) + \delta(x)D(\alpha).$$ %\ {\rm that  \ is}, \ \delta(x)D(\alpha)=0.$$
Hence we obtain $\delta(x)D(\alpha)=0$. 
Thus $\delta=0$ if  $D(B)$ contains a non-zero-divisor. 

(2) Let $\delta$ be a central $B$-derivation of $A$. Then 
$\delta(x) \in V$ and $\tau(\delta(x))=0$ by Lemma \ref{lemd2}. 
Since $x \delta(x)= \delta(x) x$, we see that $\delta(x) \in V^{\tilde{D}}$. 
Then we have
\begin{align*}
0= \tau(\delta(x))=\sum_{j=0}^{e} \tilde{D}^{p^j-1}(\delta(x))b_{j+1} = \delta(x)b_1.
\end{align*}
Hence if $b_1$ is a non-zero-divisor in $B$, then $\delta=0$. 
This completes the proof. 
\end{proof}

\smallskip

\section*{Acknowledgement}
The author wishes to thank my supervisor S. Ikehata 
with whose guidance and encouragement this work was done. 
The author also would like to thank the referee for his valuable comments.

\end{document}